\newcommand{\Z}{\mathbb{Z}}
\newcommand{\R}{\mathbb{R}}
\newcommand{\C}{\mathbb{C}}
\newcommand{\tr}{\boldsymbol{t}_\Sigma}
\newcommand{\dom}{\textup{dom }}
\newcommand{\sign}{\textup{sign}}
\newcommand{\abs}[1]{\lvert{#1}\rvert}
\newcommand{\norm}[1]{{\lVert{#1}\rVert}}
\theoremstyle{plain}
\newtheorem{thm}{Theorem}[section]
\newtheorem{lem}[thm]{Lemma}
\newtheorem{cor}[thm]{Corollary}
\newtheorem{prop}[thm]{Proposition}
\theoremstyle{definition}
\theoremstyle{remark}
\numberwithin{equation}{section}
\begin{document}

\title[Approximation of Dirac operators with confining $\delta$-shell potentials]
{Approximation of Dirac operators with confining electrostatic and Lorentz scalar $\boldsymbol{\delta}$-shell potentials}

\author[C.~Stelzer-Landauer]{Christian Stelzer-Landauer}

\address{%
	Institut für Angewandte Mathematik\\
	Technische Universität Graz\\
	Steyrergasse 30\\
	8010 Graz\\
	Austria}

\email{christian.stelzer09@gmail.com}

\subjclass{81Q10, 35P05, 35Q40}

\keywords{Dirac operators, $\delta$-shell potentials, confinement, approximation by strongly localized potentials, norm resolvent convergence}

\date{January 1, 2025}
\dedicatory{Dedicated to Henk de Snoo on the occasion of his 80th birthday!}
\begin{abstract}
	In this paper we continue the investigations from \cite{BHS23,BHSL25} regarding the approximation of Dirac operators with $\delta$-shell potentials in the norm resolvent sense. In particular, we consider the approximation of Dirac operators with confining electrostatic and Lorentz scalar $\delta$-shell potentials, where the support of the $\delta$-shell potentials is impermeable to particles modelled by such Dirac operators.
\end{abstract}

\maketitle

\section{Introduction}\label{sec_int}

Dirac operators with $\delta$-shell potentials model the propagation of relativistic spin 1/2 particles in $\R^{\theta}$, $\theta \in \{2,3\}$, under the influence of an external field which is strongly localized around a hypersurface.
Mathematically, the self-adjointness and spectral properties of such operators were firstly considered in \cite{DES89} in 1989. After a period with little progress, the seminal paper \cite{AMV14}, which appeared in 2014, sparked renewed interest in this topic and resulted in numerous publications since then; see for instance \cite{BHSS22,CLMT21,R22a}. In the current paper we consider Dirac operators with $\delta$-shell potentials given by the (formal) expression
\begin{equation}\label{eq_H_V_tilde_formal}
	H_{\widetilde{V} \delta_\Sigma} = -i \sum_{j = 1}^{\theta} \alpha_j \partial_j + m \beta + \widetilde{V} \delta_\Sigma,
\end{equation}
where $\alpha_1,\dots,\alpha_\theta,\beta \in\C^{N \times N}$, $N = 2\lceil \theta/2\rceil$, are the Dirac matrices defined in Section~\ref{sec_not}~\eqref{it_Dirac_matrices},  $m \in \R$ represents the mass of the underlying particle and $\delta_\Sigma$ is a $\delta$-shell potential supported on the boundary $\Sigma$ of a $C^2$-smooth domain $\Omega_+$ which is specified in \eqref{it_def_Sigma} of Section~\ref{sec_not}. Moreover, $\widetilde{V} = \widetilde{\eta} I_N + \widetilde{\tau} \beta$, where $\widetilde{\eta} \in \R$ and $\widetilde{\tau} \in \R$ model electrostatic and Lorentz scalar interactions, respectively.  If $\widetilde{d} = \widetilde{\eta}^{\,2} - \widetilde{\tau}^{\,2} \neq 4$, then the initially formally defined operator $H_{\widetilde{V}\delta_\Sigma}$ can  be rigorously realized as a self-adjoint operator in $L^2(\R^\theta;\C^N)$ with $\dom H_{\widetilde{V}\delta_\Sigma} \subset H^{1}(\R^\theta \setminus \Sigma;\C^N)$ via transmission conditions on $\Sigma$; see, e.g.,   \cite{AMV15,BEHL19, BHOP20,OV16, R22a}. Here, the case 
\begin{equation}\label{eq_conf}
	\widetilde{d} = \widetilde{\eta}^{\,2} - \widetilde{\tau}^{\,2} = -4
\end{equation}
is particularly interesting since \eqref{eq_conf} implies that  $H_{\widetilde{V}\delta_\Sigma}$ splits into the orthogonal sum of two independent Dirac operators on the domains $\Omega_\pm$, where $\Omega_-:= \R^\theta \setminus \overline{\Omega_+}$. More precisely, $H_{\widetilde{V}\delta_\Sigma} = H_{\widetilde{V}}^+ \oplus H_{\widetilde{V}}^-$ with 
\begin{equation*}
	\begin{aligned}
		\dom H_{\widetilde{V}}^\pm &= \bigl\{ u_\pm \in H^1(\Omega_\pm;\C^N): 
		u_\pm|_\Sigma= \pm \tfrac{i}{2}(\alpha \cdot \nu)\widetilde{V}  u_\pm|_\Sigma  \bigr\}  \subset L^2(\Omega_\pm;\C^N),\\
		H_{\widetilde{V}}^\pm u_\pm &= -i(\alpha \cdot \nabla)u_\pm + m \beta u_\pm,
	\end{aligned}
\end{equation*}
where $\nu$ denotes the unit outward normal vector of $\Omega_+$ and $\alpha \cdot \nu := \sum_{j = 1}^\theta \alpha_j  \nu_j$;  see \cite[Theorem~5.5]{AMV15}, \cite[Section~5]{CLMT21} or \cite[Example~12]{R22a}. On a physical level, this means that $\Sigma$ becomes impermeable to a particle. Inspired by this observation we call $\widetilde{\eta}$ and $\widetilde{\tau}$ confining interaction strengths in this case. Furthermore, Dirac operators  on domains are of interest on their own as they are used in three dimensions to describe relativistic particles confined to a box, including the famous MIT bag model, and in two dimensions  they are applied in the description of graphene. For an overview of this topic we refer to the introductions of \cite{BHM20, BFSB17_1, BFSB17_2} and the references therein.

To justify the usage of $H_{\widetilde{V} \delta_\Sigma}$, one approximates Dirac operators with $\delta$-shell potentials by Dirac operators with strongly localized potentials in the strong or norm resolvent sense. Strong and norm resolvent convergence are well-suited for these approximation problems as they  relate the spectra of the approximating operators and the limit operators. In order to define  strongly localized potentials, we first introduce the  mapping
\begin{equation}\label{eq_iota}
	\iota: \Sigma \times \R \to \R^\theta, \quad \iota(x_\Sigma,t):= x_\Sigma + t \nu(x_\Sigma), \quad (x_\Sigma,t) \in \Sigma \times \R,
\end{equation}
and define  for $\varepsilon >0$ the set $\Omega_\varepsilon := \iota(\Sigma \times (-\varepsilon,\varepsilon))$, which is the so-called \textit{tubular neighbourhood} of $\Sigma$. According to  \cite[Proposition~2.4]{BHS23}, we can fix  an 
$\varepsilon_1>0$  such that   $\iota \upharpoonright \Sigma \times (-\varepsilon_1,\varepsilon_1)$ is injective. Equipped with these notations we define based on 
\begin{gather}
	V = \eta I_N + \tau \beta,\, \quad \eta,\tau \in \R, \label{eq_V}\\
	q \in L^{\infty}((-1,1);[0,\infty))  \text{ with }   \int_{-1}^{1} q(s)  \, ds = 1, \label{eq_q}
\end{gather}
for $\varepsilon \in (0,\varepsilon_1)$ the strongly localized potentials
\begin{equation}\label{eq_V_eps}
	V_\varepsilon(x) := 
	\begin{cases}
		\frac{1}{\varepsilon}Vq(t/\varepsilon),&  x = \iota(x_\Sigma,t) \in \Omega_\varepsilon,\\
		0,&  x \notin \Omega_\varepsilon.
	\end{cases}
\end{equation} 
Moreover, for $m \in \R$ and $\varepsilon \in (0, \varepsilon_1)$ we introduce the self-adjoint operators
\begin{equation*}
	H_{V_\varepsilon}  := -i(\alpha \cdot \nabla)  + m \beta  + V_\varepsilon  , \quad  \dom H_{V_\varepsilon} :=  H^1(\R^\theta;\C^N);
\end{equation*}
cf. \cite[eq. (1.8)]{BHS23}.
It is known that $H_{V_\varepsilon}$ converges for $\varepsilon \to 0$ in the norm  or  strong resolvent sense (depending on the parameter $d= \eta^2 - \tau^2$) to $H_{\widetilde{V}\delta_\Sigma}$  with  $\widetilde{V}= \textup{tanc}(\sqrt{d}/2) V$, where $\textup{tanc}$ is the function defined in Section~\ref{sec_not}~\eqref{it_tanc}; see \cite{BHS23, BHSL25, BHT23,CLMT21,MP18,Z23}.  Using the rescaling formula yields $\widetilde{V} = \widetilde{\eta} I_N + \widetilde{\tau} \beta$ with $(\widetilde{\eta},\widetilde{\tau}) = \textup{tanc}(\sqrt{d}/2)(\eta,\tau)$ and hence
\begin{equation}\label{eq_rescaling_d}
	\widetilde{d} = \widetilde{\eta}^{\,2} - \widetilde{\tau}^{\,2} = d \textup{tanc}(\sqrt{d}/2)^2 = 4 \tan(\sqrt{d}/2)^2.
\end{equation} 
Note that $\widetilde{d}\geq0$ for $d\geq0$ and $\widetilde{d} =-4\tanh(\sqrt{\abs{d}}/2)$ for $d <0$. In particular, $\widetilde{d}$ is  bigger than $-4$. This means that $H_{\widetilde{V}\delta_\Sigma}$ with $\widetilde{d} = -4$, i.e. Dirac operators with confining interaction strengths, cannot be approximated with  this approach. This raises the question whether there is also a way to approximate such Dirac operators by Dirac operators with strongly localized potentials.
Inspired by the rescaling of $d$ from \eqref{eq_rescaling_d}, one would have to choose $V= \eta I_N + \tau \beta$ with $d = \eta^2-\tau^2 = -\infty$ as the interaction matrix in the approximating operators to obtain a Dirac operator with a $\delta$-shell potential and $ \widetilde{d} = \widetilde{\eta}^{\,2}-\widetilde{\tau}^{\,2}= -4$ in the limit. We rigorously  realize this idea by choosing $\varepsilon$-dependent interaction strengths $\eta_\varepsilon$ and $\tau_\varepsilon$ such that $\eta_\varepsilon^2 - \tau_\varepsilon^2 \overset{\varepsilon \to 0}{\longrightarrow}  -\infty$. The same approach was used in \cite[Chapter~3]{Ru21} when dealing with the one-dimensional counterpart. See also \cite{Z24} for related considerations.

Next, let us describe this approach in more detail. First, we  fix a continuous scaling function $f: (0,\varepsilon_1) \to (0,\infty)$, which satisfies the  condition 
\begin{equation}\label{eq_conf_f}
	\lim_{\varepsilon \to 0} f(\varepsilon) = \infty \quad \textup{and} \quad \lim_{\varepsilon \to 0} f(\varepsilon)^{3/2}\varepsilon^\gamma = 0
\end{equation} 
for a $\gamma \in (0,1/2)$. We also introduce the $\varepsilon$-dependent interaction strengths
\begin{equation*}
	(\eta_\varepsilon,\tau_\varepsilon) =f(\varepsilon)(\eta,\tau)\quad \textup{and} \quad  d_\varepsilon = \eta_\varepsilon^2 -\tau_\varepsilon^2 = f(\varepsilon)^2d
\end{equation*}
and  assume from now on
\begin{equation}\label{eq_conf_V}
	d = \eta^2 - \tau^2  <0,
\end{equation}
which guarantees
\begin{equation*}
	d_\varepsilon = \eta_\varepsilon^2 -\tau_\varepsilon^2 = f(\varepsilon)^2 d \overset{\varepsilon \to 0}{\longrightarrow}  - \infty.
\end{equation*}
Dirac operators with strongly localized potentials and the $\varepsilon$-dependent interaction matrix $\eta_\varepsilon I_N + \tau_\varepsilon \beta = f(\varepsilon)V$ are given by
\begin{equation*}
	H_{f(\varepsilon)V_\varepsilon} u := -i(\alpha \cdot \nabla) + m \beta  + f(\varepsilon)V_\varepsilon , \quad  \dom H_{f(\varepsilon) V_\varepsilon} :=  H^1(\R^\theta;\C^N),
\end{equation*}
where $V_\varepsilon $ is defined by \eqref{eq_V_eps}.
Note that for a fixed $\varepsilon' \in (0,\varepsilon_1)$ the operator $H_{f(\varepsilon')V_\varepsilon}$ converges by \cite[Theorem~2.1]{BHSL25} for $\varepsilon \to 0$ in the norm resolvent sense to $H_{\widetilde{V}_{\varepsilon'} \delta_\Sigma}$, where  
\begin{equation}\label{eq_V_eps_tilde}
	\widetilde{V}_{\varepsilon'} := \textup{tanc}(\sqrt{d_{\varepsilon'}}/2)f(\varepsilon')V.
\end{equation}
Relying on the methods developed in \cite{BHS23,BHSL25}, we prove in the present paper that the  norm resolvent limit of $H_{f(\varepsilon)V_\varepsilon}$ is the operator $H_{\widetilde{V} \delta_\Sigma}$ with 
\begin{equation*}
	\begin{aligned}
		\widetilde{V} = \lim_{\varepsilon \to 0} 	\widetilde{V}_{\varepsilon} &= \lim_{\varepsilon \to 0} \textup{tanc}(\sqrt{d_\varepsilon}/2) f(\varepsilon) V \\
		&= \lim_{\varepsilon \to 0} \frac{2\tan(f(\varepsilon)\sqrt{d}/2 )}{ f(\varepsilon)\sqrt{d}}f(\varepsilon) V \\
		&= \lim_{\varepsilon \to 0} \frac{2\tan(f(\varepsilon)\sqrt{d}/2 )}{ \sqrt{d}} V\\
		&= \lim_{\varepsilon \to 0} \frac{2\tanh(f(\varepsilon)\sqrt{\abs{d}}/2)}{ \sqrt{\abs{d}}} V= \frac{2}{\sqrt{|d|}} V.
	\end{aligned}
\end{equation*}
Consequently, $\widetilde{V} = \widetilde{\eta} I_N + \widetilde{\tau}  \beta $ with $(\widetilde{\eta},\widetilde{\tau}) = (2/\sqrt{|d|})(\eta,\tau)$ and, in turn, we have  $\widetilde{d} = \widetilde{\eta}^{\,2}-  \widetilde{\tau}^{\,2} = -4$, i.e.  $\widetilde{\eta}$ and $\widetilde{\tau}$ are confining interaction strengths.

To  prove the norm resolvent convergence, we find for $z \in \C \setminus \R$ the estimates 
\begin{equation}\label{eq_res_dif_1}
	\lVert (H_{\widetilde{V}\delta_\Sigma}-z)^{-1} - (H_{\widetilde{V}_\varepsilon \delta_\Sigma}-z)^{-1} \rVert_{L^2(\R^\theta;\C^N)\to L^2(\R^\theta;\C^N)} \leq C e^{-f(\varepsilon) \sqrt{\abs{d}}}
\end{equation}
and 
\begin{equation}\label{eq_res_dif_2}
	\lVert(H_{\widetilde{V}_\varepsilon \delta_\Sigma}-z)^{-1}-(H_{f(\varepsilon)V_\varepsilon}-z)^{-1}\rVert_{L^2(\R^\theta;\C^N)\to L^2(\R^\theta;\C^N)} \leq C f(\varepsilon)^{3/2}\varepsilon^{\gamma}
\end{equation}
in Section~\ref{sec_res_dif_1} (Proposition~\ref{prop_dif_1}) and Section~\ref{sec_res_dif_2} (Proposition~\ref{prop_dif_2}), respectively. Using these two estimates and the triangle inequality yields the main result of this paper, which reads as follows.
\begin{thm}\label{theo_conf}
	Let  $q$ be as in \eqref{eq_q}, $V = \eta I_N + \tau \beta $, $\eta,\tau \in\R$,  satisfy  \eqref{eq_conf_V}, $V_\varepsilon$ be defined by \eqref{eq_V_eps}, $f$ be as in \eqref{eq_conf_f} \textup{(}with $\gamma \in (0,1/2)$\textup{)} and  $z \in \C\setminus\R$. Moreover, set $\widetilde{V} = (2/\sqrt{|d|})V$, where $d = \eta^2 - \tau^2$. Then, there exists a  $\delta > 0$ and a $C>0$ such that
	\begin{equation*}
		\begin{aligned}
			\|(H_{\widetilde{V} \delta_\Sigma} -z)^{-1} - (H_{f(\varepsilon)V_\varepsilon} -z)^{-1}&\|_{L^2(\R^{\theta};\C^N) \to L^2(\R^{\theta};\C^N)} \\
			&\qquad \leq C \bigl( e^{-f(\varepsilon) \sqrt{\abs{d}}} + f(\varepsilon)^{3/2} \varepsilon^{\gamma} \bigr)
		\end{aligned}
	\end{equation*}
	for all $ \varepsilon \in  (0,\delta)$. In particular, $H_{f(\varepsilon) V_\varepsilon}$ converges to $H_{\widetilde{V}\delta_\Sigma}$ in the norm resolvent sense as $\varepsilon \to 0$.
\end{thm}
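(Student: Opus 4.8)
The plan is to obtain the claimed bound by a two-step triangle inequality argument, inserting the intermediate operator $H_{\widetilde{V}_\varepsilon \delta_\Sigma}$ with $\widetilde{V}_\varepsilon$ as in \eqref{eq_V_eps_tilde} between the target operator $H_{\widetilde{V}\delta_\Sigma}$ and the approximating operator $H_{f(\varepsilon)V_\varepsilon}$. Concretely, for $z \in \C \setminus \R$ one writes
\begin{align*}
	&\|(H_{\widetilde{V} \delta_\Sigma} -z)^{-1} - (H_{f(\varepsilon)V_\varepsilon} -z)^{-1}\| \\
	&\qquad \leq \|(H_{\widetilde{V} \delta_\Sigma} -z)^{-1} - (H_{\widetilde{V}_\varepsilon \delta_\Sigma} -z)^{-1}\| + \|(H_{\widetilde{V}_\varepsilon \delta_\Sigma} -z)^{-1} - (H_{f(\varepsilon)V_\varepsilon} -z)^{-1}\|,
\end{align*}
all operator norms being taken from $L^2(\R^\theta;\C^N)$ to itself, and then applies \eqref{eq_res_dif_1} to the first summand and \eqref{eq_res_dif_2} to the second. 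Since both of those estimates are asserted to hold (in Propositions~\ref{prop_dif_1} and~\ref{prop_dif_2}) for all sufficiently small $\varepsilon$, there is a $\delta > 0$ below which both are simultaneously valid, and adding them gives exactly the stated bound with a common constant $C$.

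One point that needs a little care is that \eqref{eq_res_dif_1} and \eqref{eq_res_dif_2} as displayed in the introduction are phrased for a \emph{fixed} $z \in \C \setminus \R$ with constants $C$ that a priori depend on $z$; the theorem is likewise stated for a fixed $z \in \C \setminus \R$, so there is no uniformity-in-$z$ issue to resolve here, and one simply takes $C$ to be the maximum of the two constants coming from the two propositions (and $\delta$ the minimum of the two thresholds). It should also be checked that the intermediate object $H_{\widetilde{V}_\varepsilon \delta_\Sigma}$ is well defined as a self-adjoint operator for small $\varepsilon$, i.e. that $\widetilde{d}_\varepsilon := \widetilde{\eta}_\varepsilon^{\,2} - \widetilde{\tau}_\varepsilon^{\,2} \neq 4$; this follows from \eqref{eq_rescaling_d} applied with $d$ replaced by $d_\varepsilon = f(\varepsilon)^2 d < 0$, which gives $\widetilde{d}_\varepsilon = -4\tanh(f(\varepsilon)\sqrt{|d|}/2)^2 \in (-4,0)$, so this operator is genuinely the norm resolvent limit of $H_{f(\varepsilon')V_\varepsilon}$ as recalled after \eqref{eq_V_eps_tilde}.

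For the final sentence of the theorem — the actual norm resolvent convergence — one observes that the right-hand side $C\bigl(e^{-f(\varepsilon)\sqrt{|d|}} + f(\varepsilon)^{3/2}\varepsilon^\gamma\bigr)$ tends to $0$ as $\varepsilon \to 0$: the first term vanishes because $f(\varepsilon) \to \infty$ by \eqref{eq_conf_f} and $\sqrt{|d|} > 0$ by \eqref{eq_conf_V}, and the second term vanishes because it is precisely the quantity $f(\varepsilon)^{3/2}\varepsilon^\gamma$ that \eqref{eq_conf_f} forces to $0$. Hence $(H_{f(\varepsilon)V_\varepsilon} - z)^{-1} \to (H_{\widetilde{V}\delta_\Sigma} - z)^{-1}$ in operator norm for one, hence (by the standard resolvent identity argument) for every, $z \in \C \setminus \R$, which is the definition of norm resolvent convergence.

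In this form the argument is essentially bookkeeping; the real content of the paper is hidden in the two propositions. I expect the main obstacle to lie entirely in Proposition~\ref{prop_dif_1}, i.e. in establishing the exponential rate $e^{-f(\varepsilon)\sqrt{|d|}}$ for the resolvent difference between $H_{\widetilde{V}\delta_\Sigma}$ (the confining, $\widetilde{d} = -4$ operator, which decouples into operators on $\Omega_\pm$) and $H_{\widetilde{V}_\varepsilon\delta_\Sigma}$ (with $\widetilde{d}_\varepsilon \in (-4,0)$, which does \emph{not} decouple): one must track how the transmission condition degenerates as $\widetilde{d}_\varepsilon \uparrow -4$ and show that the "leakage" across $\Sigma$ is exponentially small in $f(\varepsilon)$. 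This is presumably done via a Krein-type resolvent formula expressing both resolvents through boundary integral operators on $\Sigma$ and estimating the difference of the corresponding Birman–Schwinger-type operators; the $\tanh(f(\varepsilon)\sqrt{|d|}/2) = 1 - 2e^{-f(\varepsilon)\sqrt{|d|}} + O(e^{-2f(\varepsilon)\sqrt{|d|}})$ expansion is what produces the exponential rate. Proposition~\ref{prop_dif_2}, by contrast, should be a quantitative refinement of the already-known convergence $H_{f(\varepsilon')V_\varepsilon} \to H_{\widetilde{V}_{\varepsilon'}\delta_\Sigma}$ from \cite{BHSL25}, now made uniform enough in the scaling $f(\varepsilon)$ to yield the explicit rate $f(\varepsilon)^{3/2}\varepsilon^\gamma$.
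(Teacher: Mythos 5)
Your proposal is correct and matches the paper's argument exactly: the theorem is obtained by inserting the intermediate operator $H_{\widetilde{V}_\varepsilon\delta_\Sigma}$ and combining the estimates of Propositions~\ref{prop_dif_1} and~\ref{prop_dif_2} via the triangle inequality, with $\delta$ the minimum of the two thresholds. Your side remarks (well-definedness of the intermediate operator since $\widetilde{d}_\varepsilon\in(-4,0)$, and the vanishing of the right-hand side under \eqref{eq_conf_f}) are also consistent with how the paper handles these points inside the two propositions.
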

Note that if  $V = \eta I_N + \tau \beta$, $\eta,\tau \in \R$, such that $d = \eta^2 - \tau^2 = -4$, then
\begin{equation*}
	\widetilde{V} =  \frac{2}{\sqrt{\abs{d}}} V = V.
\end{equation*}
Hence, we obtain the following corollary.
\begin{cor}\label{cor_conf}
	Let  $q$ be as in \eqref{eq_q}, $V = \eta I_N +\tau \beta$,  $\eta,\tau \in \R$, fulfil the equation $d =\eta^2 -\tau^2 =-4$, $V_\varepsilon$ be defined by \eqref{eq_V_eps}, $f$ be as in \eqref{eq_conf_f} \textup{(}with $\gamma \in (0,1/2)$\textup{)} and  $z \in \C\setminus\R$. Then, there exists a $\delta >0 $ and  a $C>0$ such that
	\begin{equation*}
		\begin{aligned}
			\|(H_{f(\varepsilon)V_\varepsilon} -z)^{-1} - (H_{V \delta_\Sigma} -z)^{-1}&\|_{L^2(\R^{\theta};\C^N) \to L^2(\R^{\theta};\C^N)} \\
			&\qquad \leq C \bigl( e^{-f(\varepsilon) \sqrt{\abs{d}}} + f(\varepsilon)^{3/2} \varepsilon^{\gamma} \bigr)
		\end{aligned}
	\end{equation*}
	for all $ \varepsilon \in  (0,\delta)$. In particular, $H_{f(\varepsilon) V_\varepsilon}$ converges to $H_{V \delta_\Sigma}$ in the norm resolvent sense as $\varepsilon \to 0$.
\end{cor}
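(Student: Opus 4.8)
The plan is to obtain Corollary~\ref{cor_conf} as an immediate specialization of Theorem~\ref{theo_conf} to the case $d=-4$, so the proof will be very short. First I would verify that the hypotheses of Theorem~\ref{theo_conf} are satisfied under the assumptions of the corollary: the potentials $q$, $V_\varepsilon$, the scaling function $f$ (with $\gamma\in(0,1/2)$) and the spectral parameter $z\in\C\setminus\R$ are taken exactly as in the theorem, and since $d=\eta^2-\tau^2=-4<0$, the sign condition \eqref{eq_conf_V} holds. Hence Theorem~\ref{theo_conf} applies and yields a $\delta>0$ and a $C>0$ such that
\[
  \|(H_{\widetilde{V}\delta_\Sigma}-z)^{-1}-(H_{f(\varepsilon)V_\varepsilon}-z)^{-1}\| \leq C\bigl(e^{-f(\varepsilon)\sqrt{\abs{d}}}+f(\varepsilon)^{3/2}\varepsilon^{\gamma}\bigr)
\]
for all $\varepsilon\in(0,\delta)$, where $\widetilde{V}=(2/\sqrt{\abs{d}})V$ and the operator norm is taken on $L^2(\R^\theta;\C^N)$.

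The only remaining step is to identify the limit operator. Since $\abs{d}=4$, one has $2/\sqrt{\abs{d}}=1$, so $\widetilde{V}=V$ and therefore $H_{\widetilde{V}\delta_\Sigma}=H_{V\delta_\Sigma}$. Substituting this into the bound above gives precisely the claimed estimate. To conclude norm resolvent convergence as $\varepsilon\to0$, I would note that the right-hand side tends to zero: $e^{-f(\varepsilon)\sqrt{\abs{d}}}\to0$ because $f(\varepsilon)\to\infty$ by the first part of \eqref{eq_conf_f}, and $f(\varepsilon)^{3/2}\varepsilon^{\gamma}\to0$ is the second part of \eqref{eq_conf_f}.

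I do not expect any genuine obstacle here, since all the analytic work has already been carried out in Theorem~\ref{theo_conf}; the corollary merely records the algebraic coincidence that the rescaling factor $2/\sqrt{\abs{d}}$ equals $1$ exactly when $d=-4$, which is also the threshold at which $H_{\widetilde{V}\delta_\Sigma}$ decouples into two independent Dirac operators on $\Omega_\pm$. In other words, when the target confining $\delta$-shell operator is the one whose interaction matrix $V$ already satisfies $\eta^2-\tau^2=-4$, the approximating family can be built from that same $V$, amplified by the factor $f(\varepsilon)$.
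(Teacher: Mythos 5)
Your proposal is correct and matches the paper's own argument: the corollary is obtained exactly by specializing Theorem~\ref{theo_conf} to $d=-4$, observing that $2/\sqrt{\abs{d}}=1$ so that $\widetilde{V}=V$ and $H_{\widetilde{V}\delta_\Sigma}=H_{V\delta_\Sigma}$. Nothing is missing.
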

This corollary shows that every Dirac operator with given constant confining Lorentz scalar and electrostatic interaction strengths is the norm resolvent limit of Dirac operators with strongly localized potentials.

Finally, let us mention that the present paper is based on Section~7.1 of the dissertation \cite{SL24}, where for position dependent interaction strengths norm resolvent convergence with a weaker convergence rate was proven.

\subsection{Notations and Assumptions}\label{sec_not}
In this paper we use the following notations and assumptions:
\begin{enumerate}[(i)]
	\item By $\theta \in \{ 2, 3 \}$ we denote the space dimension and we set $N=2$ for $\theta=2$ and $N=4$ for $\theta = 3$. 
	\item \label{it_Dirac_matrices} The Pauli spin matrices are given by
	\begin{equation*}
		\sigma_1 = \begin{pmatrix} 0 & 1\\ 1 & 0 \end{pmatrix}, \quad\sigma_2 = \begin{pmatrix} 0 & -i\\ i & 0 \end{pmatrix} \quad \text{and} \quad \sigma_3 =\begin{pmatrix} 1 & 0\\ 0 & -1 \end{pmatrix}.
	\end{equation*} 
	With their help the Dirac matrices $\alpha_1, \dots, \alpha_\theta, \beta \in \mathbb{C}^{N \times N}$ are defined for $\theta=2$ by
	\begin{equation*}
		\alpha_1 := \sigma_1, \quad \alpha_2 := \sigma_2 \quad \text{and} \quad \beta := \sigma_3,
	\end{equation*}
	and  for $\theta = 3$ by
	\begin{equation*}
		\alpha_j := \begin{pmatrix} 0 &
			\sigma_j \\ \sigma_j & 0
		\end{pmatrix}\text{ for } j=1,2,3  \quad \text{and} \quad \beta := \begin{pmatrix}
			I_2 & 0 \\ 0 & -I_2
		\end{pmatrix},
	\end{equation*}
	where $I_2$ is the $2 \times 2$-identity matrix. The Dirac matrices satisfy for $j,k \in \{1,\dots,\theta\}$ the rules
	\begin{equation*}
		\alpha_j \alpha_k + \alpha_k \alpha_j = 2 I_N \delta_{jk} \quad \text {and} \quad \alpha_j \beta + \beta \alpha_j=0,
	\end{equation*}
	where $\delta_{jk}$ denotes the Kronecker delta and $I_N$ is the $N \times N$ identity matrix.
	We will also often make use of the notations
	\begin{equation*}
		\alpha \cdot \nabla := \sum_{j = 1}^\theta \alpha_j \partial_j  \text{ and }  \alpha \cdot x := \sum_{j = 1}^\theta \alpha_j x_j, \,\, x = (x_1, \dots, x_\theta) \in \R^\theta,
	\end{equation*}
	and the identity $(\alpha \cdot x)^2 = I_N \abs{x}^2$, $x \in \R^{\theta}$.
	\item \label{it_def_Sigma} We assume that $\Sigma$ is  the boundary of an open set $\Omega_+ \subset \R^{\theta}$ which satisfies the following: There exist open sets  $W_1, \dots ,W_p \subset \mathbb{R}^\theta$, mappings $\zeta_1,\dots,\zeta_p \in C^{2}_b(\R^{\theta-1}; \mathbb{R})$, rotation matrices $\kappa_1, \dots ,\kappa_p \in \R^{\theta \times \theta}$ and a number $\varepsilon_0 >0$ such that
	\begin{itemize}
		\item[(i)] $\Sigma \subset \bigcup_{l=1}^p W_l$;
		\item[(ii)] if $x \in \partial \Omega_+ = \Sigma$, then there exists an $l \in \{1, \dots ,p\}$ such that $B(x,\varepsilon_0) \subset  W_l$;
		\item[(iii)] $W_l \cap \Omega_+
		= W_l \cap \Omega_l$, where 
		\begin{equation*}
			\Omega_l := \biggl\{ \kappa_l \begin{pmatrix} x' \\ x_\theta
			\end{pmatrix}: x_\theta < \zeta_l(x'), \, \begin{pmatrix} x' \\ x_\theta
			\end{pmatrix} \in \R^\theta \biggr\}
		\end{equation*}
		for $ l \in \{1,\dots,p\}$.
	\end{itemize}
	Furthermore, we  denote the unit normal vector field at $\Sigma$ that is pointing outwards of $\Omega_+$ by $\nu$ and for $u: \mathbb{R}^\theta \rightarrow \mathbb{C}^N$ we define $u_\pm := u \upharpoonright \Omega_\pm$. Finally, let us mention that $W$ denotes the Weingarten map corresponding to $\Sigma$;  see \cite[Definition~2.3 and Proposition~2.4]{BHS23} for details. 
	
	\item Let $\mathcal{H}$ and $\mathcal{G}$ be Hilbert spaces and  $A$ be a linear operator from $\mathcal{H}$ to $\mathcal{G}$. The domain of $A$ is denoted by $\dom A$. If $A$ is bounded and everywhere defined, then we write $\| A \|_{\mathcal{H} \rightarrow \mathcal{G}}$ for its operator norm. If $\mathcal{H} = \mathcal{G}$ and $A$ is a closed operator, then the resolvent set and the spectrum  of $A$ are denoted by $\rho(A)$ and  $\sigma(A)$, respectively.
	
	\item\label{it_Sobolev}
	If $U \subset \R^\theta$ is open, then  $H^r(U;\C^N)$ denotes the $L^2(U;\C^N)$-based Sobolev space of order $r$; cf. \cite[Chaper~3]{M00}. Moreover, we also use the Sobolev trace space $H^r(\Sigma;\C^N)$, $r \in [-2,2]$, which is  defined via partition of unity as in  \cite[Section~2.1]{BHS23}.
	The well-defined and bounded Dirichlet trace operators are denoted by
	\begin{equation*}
		\begin{aligned}
			\tr^{\pm}&:  H^r(\Omega_\pm;\C^N) \to H^{r-1/2}(\Sigma;\C^N),\\
			\tr &: H^{r}(\R^{\theta};\C^N) \to H^{r-1/2}(\Sigma;\C^N),
		\end{aligned}
	\end{equation*}
	$r \in (1/2,5/2)$; cf. \cite[Theorem~2]{M87}. Furthermore, for 
	\begin{equation*}
		u \in H^r(\R^{\theta} \setminus \Sigma;\C^N) = H^r(\Omega_+;\C^N) \oplus H^r(\Omega_-;\C^N)
	\end{equation*} 
	we use the shortened notation $\tr^\pm u$ instead of $\tr^\pm u_\pm$. 
	
	\item\label{it_Bochner} The  $L^2((-1,1))$-based Bochner-Lebesgue space of $H^r(\Sigma;\C^N)$-valued
	functions $L^2((-1,1);H^r(\Sigma;\C^N))$, cf. \cite[Section~2.2]{BHS23}, is denoted by the symbol $\mathcal{B}^r(\Sigma)$. We also write
	$\norm{\cdot}_r$ for the norm in $\mathcal{B}^r(\Sigma)$. In a similar way, we define
	\begin{equation*}\begin{aligned} 
			\norm{\cdot}_{r \to r'} &:=\norm{\cdot}_{\mathcal{B}^r(\Sigma) \to \mathcal{B}^{r'}(\Sigma)},\\
			\norm{\cdot}_{r \to \mathcal{H}} &:= \norm{\cdot}_{\mathcal{B}^r(\Sigma) \to \mathcal{H}},\\
			\norm{\cdot}_{\mathcal{H} \to r'} &:= \norm{\cdot}_{\mathcal{H} \to \mathcal{B}^{r'}(\Sigma) }.
		\end{aligned}
	\end{equation*}
	We   use the following convenient identification: Let  $\mathcal{A}$ be a bounded operator in $H^r(\Sigma;\C^N)$, $r \in [-2,2]$,
	and $\mathcal{Q} \in L^\infty((-1,1))$. Then, we identify 
	\begin{equation*}
		\mathcal{M}_{\mathcal{Q}}: \mathcal{B}^r(\Sigma)\to \mathcal{B}^r(\Sigma),\quad 
		(\mathcal{M}_{\mathcal{Q}}f)(t) := \mathcal{Q}(t) f(t), 
	\end{equation*}
	and 
	\begin{equation*}
		\mathcal{M}_{\mathcal{A}} : \mathcal{B}^r(\Sigma)\to \mathcal{B}^r(\Sigma) ,\quad
		(\mathcal{M}_{\mathcal{A}}f)(t) :=  \mathcal{A}(f(t)), 
	\end{equation*}
	with $\mathcal{Q}$ and $\mathcal{A}$, respectively. Note  that   $\norm{\mathcal{M}_{\mathcal{Q}}}_{r \to r}$ and $\norm{\mathcal{M}_{\mathcal{A}}}_{r \to r}$ are equal  to $\norm{\mathcal{Q}}_{L^\infty((-1,1))}$ and $\norm{\mathcal{A}}_{H^r(\Sigma;\C^N)\to H^r(\Sigma;\C^N)}$, respectively. 
	
	\item The letter $C>0$ always denotes a generic constant which may change in-between lines.

	\item The branch of the square root is fixed by $\text{Im}\, \sqrt{w} > 0$ for $w \in \C \setminus [0, \infty)$.
	
	\item \label{it_tanc} For $w\in \C \setminus \{k\pi + \pi/2 : k \in \Z \}$ we define the function 	
	\begin{equation*}
		\textup{tanc}(w) :=
		\begin{cases}
			\tfrac{\tan(w)}{w}, &  w \in \C \setminus \big(\{0\} \cup \{ k\pi + \pi/2 : k \in \Z \}\big),\\
			1, & w = 0.
		\end{cases}
	\end{equation*}
	For $ x \in \R$ the equation $\textup{tanc}(i x) = \tanh(x)/x$ is valid.
	
\end{enumerate}

\section{A bound for (\ref{eq_res_dif_1})}\label{sec_res_dif_1}
Before we estimate \eqref{eq_res_dif_1}, we provide a resolvent formula for Dirac operators with $\delta$-shell potentials in terms of the resolvent of the free Dirac operator and potential and boundary integral operators associated to the Dirac operator. The free Dirac operator
\begin{equation*}
	\begin{aligned}
		H  &:= - i (\alpha \cdot \nabla)  + m \beta , \qquad \dom H := H^1(\R^\theta;\C^N),
	\end{aligned}
\end{equation*}
is self-adjoint in $L^2(\R^\theta;\C^N)$ and $\sigma(H) = (-\infty,-\lvert m \rvert]\cup[\lvert m \rvert,\infty)$;  see for instance \cite[Section~2]{BHT23} for $\theta=2$ and \cite[Theorem 1.1]{T92} for $\theta =3$. Moreover, for $z \in \rho(H) = \C \setminus \sigma(H)$, 
\begin{equation*}
	\begin{aligned}
		R_z u(x) &:= (H-z)^{-1} u(x) \\
		&= \int_{\mathbb{R}^\theta} G_z(x-y) u(y) \,d y, \quad u \in L^2(\mathbb{R}^\theta; \mathbb{C}^N), ~x \in \mathbb{R}^\theta,
	\end{aligned}
\end{equation*}
where  $G_z$ is given for $\theta=2$ and $x \in \mathbb{R}^2 \setminus \{ 0 \}$ by
\begin{equation*}
	\begin{aligned}
		G_z(x) &= \frac{\sqrt{ z^2-m^2}}{2\pi} K_1\big(-i \sqrt{ z^2-m^2}\abs{x}\big)\frac{\alpha \cdot x}{\abs{x}} 
		\\
		&\hspace{100 pt}+\frac{1}{2\pi} K_0\big(-i \sqrt{ z^2-m^2}\abs{x}\big)\left(m\beta +  z I_2\right)
	\end{aligned}
\end{equation*}	
and for $\theta=3$ and $x \in \mathbb{R}^3 \setminus \{ 0 \}$ by
\begin{equation*}
	G_ z(x) = \left(  z I_4 + m \beta + i\left( 1 - i \sqrt{ z^2 -m^2}\abs{x} \right) \frac{ \alpha \cdot x }{\abs{x}^2}\right)\frac{e^{i\sqrt{ z^2 -m^2} \abs{x}}}{4 \pi \abs{x}};
\end{equation*}
cf., e.g., \cite[eq. (3.2)]{BHOP20}, \cite[eq. (1.19)]{BHSS22} or \cite[eq. (1.263)]{T92}. Here, $K_0$ and $K_1$ denote the modified Bessel functions of the second kind of order zero and one, respectively.
Next, let us introduce for $z \in \rho(H)$ the potential integral operator 
\begin{equation}\label{eq_Phi_z}
	\begin{aligned}
		\Phi_z: L^2(\Sigma;\C^N) &\to L^2(\R^\theta;\C^N),\\
		\Phi_z \psi (x) &= \int_{\Sigma} G_z(x-y_\Sigma)\psi(y_\Sigma) \,d\sigma(y_\Sigma).
	\end{aligned}
\end{equation}
By \cite[Lemma~2.1]{AMV14} this operator is well-defined and bounded. Additionally, $\Phi_z$ also acts  as a bounded operator from $H^r(\Sigma;\C^N)$ to  $H^{r+1/2}(\R^\theta\setminus \Sigma;\C^N)$ for any $r \in [0,1/2]$ and $\Phi_z^*$ is bounded as an operator from $L^2(\R^\theta;\C^N)$ to $H^{1/2}(\Sigma;\C^N)$; see \cite[Proposition~2.8]{BHS23}. The boundary integral operator $\mathcal{C}_z:L^2(\Sigma;\C^N) \to L^2(\Sigma;\C^N)$ is defined as the unique bounded extension of
\begin{equation}\label{eq_C_z}
	H^{1/2}(\Sigma;\C^N) \ni \varphi \mapsto \frac{1}{2}(\tr^+  + \tr^-)\Phi_z \varphi \in H^{1/2}(\Sigma;\C^N).
\end{equation}
According to \cite[Proposition 2.9]{BHS23} $\mathcal{C}_z$ is well-defined and also acts as a bounded operator in $H^r(\Sigma;\C^N)$ for $r \in [-1/2,1/2]$. Finally, let us mention that the operator $H_{\widetilde{V}\delta_\Sigma}$, which  is formally given by \eqref{eq_H_V_tilde_formal}, can be rigorously realized via transmission conditions as an unbounded operator in $L^2(\R^\theta;\C^N)$ by
\begin{equation*}
	\begin{split}
		\dom H_{\widetilde{V}\delta_\Sigma}&:= \biggl\{ u \in  H^1 (\R^\theta \setminus \Sigma;\C^N) =H^1 (\Omega_+;\C^N) \oplus H^1(\Omega_-; \mathbb{C}^N): \\
		&  \hspace{90 pt} i(\alpha \cdot \nu )(\tr^+   - \tr^- )u +  \frac{\widetilde{V}}{2}(\tr^+   + \tr^- )u =0 \biggr\}, \\
		H_{\widetilde{V}\delta_\Sigma}u & := (-i(\alpha \cdot \nabla) + m \beta) u_+ \oplus (-i(\alpha \cdot \nabla ) + m \beta) u_-;
	\end{split}
\end{equation*}
see \cite[eqs.~(1.1)--(1.3)]{BEHL19} or \cite[Section~4.1]{OV16}.
Having all the necessary notations at hand, we provide a self-adjointness criteria and a resolvent formula for $H_{\widetilde{V}\delta_\Sigma}$.

\begin{prop}\label{prop_H_V_hat}
	Let $\widetilde{V} = \widetilde{\eta}I_N + \widetilde{\tau} \beta$, $\widetilde{\eta} , \widetilde{\tau} \in \R$,  satisfy $\widetilde{d} = \widetilde{\eta}^{\,2}-\widetilde{\tau}^{\,2} \neq 4$.
	Then, $H_{\widetilde{V} \delta_\Sigma}$ is self-adjoint in $L^2(\R^\theta;\C^N)$. Moreover, for $z \in \C\setminus \R$ the operator $I+ \mathcal{C}_z\widetilde{V}$ is boundedly invertible in $H^r(\Sigma)$, $r \in [0,1/2]$, and the resolvent formula   
	\begin{equation*}
		(H_{\widetilde{V} \delta_\Sigma} - z)^{-1} = R_z - \Phi_z \widetilde{V}(I+\mathcal{C}_z\widetilde{V})^{-1} \Phi_{\overline{z}}^*
	\end{equation*}
	applies.
\end{prop}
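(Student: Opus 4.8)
The plan is to derive both claims — bounded invertibility of $I+\mathcal{C}_z\widetilde{V}$ and the resolvent formula (which simultaneously yields self-adjointness) — from the layer potentials $R_z$, $\Phi_z$, $\mathcal{C}_z$, using three elementary facts: the Plemelj-type jump relations $\tr^{\pm}\Phi_z=\mathcal{C}_z\mp\tfrac{i}{2}(\alpha\cdot\nu)$ (consistent with the definition \eqref{eq_C_z} of $\mathcal{C}_z$ as the symmetrized trace), the adjoint identity $\Phi_{\overline{z}}^{*}=\tr R_z$ (a consequence of $G_z(x)^{*}=G_{\overline{z}}(-x)$ and of $R_z$ having kernel $G_z$), and the matrix identities $(\alpha\cdot\nu)^2=I_N$, $(\alpha\cdot\nu)\beta=-\beta(\alpha\cdot\nu)$.

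First I would note that $H_{\widetilde{V}\delta_\Sigma}$ is symmetric: for $u,v$ in its domain, Green's identity for $-i(\alpha\cdot\nabla)$ on $\Omega_{+}$ and on $\Omega_{-}$ (with outer normals $\nu$ and $-\nu$) reduces $\langle H_{\widetilde{V}\delta_\Sigma}u,v\rangle-\langle u,H_{\widetilde{V}\delta_\Sigma}v\rangle$ to a single boundary pairing in $\tr^{\pm}u,\tr^{\pm}v$, and inserting the transmission conditions — here $\widetilde{V}=\widetilde{V}^{*}$ is used — makes it vanish (alternatively one invokes the self-adjointness from \cite{AMV15,BEHL19,OV16,R22a}). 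Next I would show that $I+\mathcal{C}_z\widetilde{V}$ is injective on $H^{1/2}(\Sigma)$ for $z\in\C\setminus\R$: if $(I+\mathcal{C}_z\widetilde{V})\varphi=0$, set $u:=\Phi_z\widetilde{V}\varphi$; then $u\in H^{1}(\R^{\theta}\setminus\Sigma)$ by the mapping property of $\Phi_z$, $(-i(\alpha\cdot\nabla)+m\beta-z)u=0$ off $\Sigma$ since $G_z$ is the fundamental solution, and the jump relations together with $(\alpha\cdot\nu)^2=I_N$ give $i(\alpha\cdot\nu)(\tr^{+}-\tr^{-})u+\tfrac{\widetilde{V}}{2}(\tr^{+}+\tr^{-})u=\widetilde{V}(I+\mathcal{C}_z\widetilde{V})\varphi=0$, so $u\in\dom H_{\widetilde{V}\delta_\Sigma}$ and $(H_{\widetilde{V}\delta_\Sigma}-z)u=0$, whence $u=0$ by symmetry; then $\tr^{\pm}u=0$ forces $(\alpha\cdot\nu)\widetilde{V}\varphi=0$, hence $\widetilde{V}\varphi=0$, hence $\varphi=-\mathcal{C}_z\widetilde{V}\varphi=0$.

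For surjectivity — the step where the hypothesis $\widetilde{d}\neq 4$ is genuinely used — I would argue that $\mathcal{C}_z-\mathcal{C}_{z'}$ is compact on $H^{r}(\Sigma)$ for $z,z'\in\rho(H)$ (the difference $G_z-G_{z'}$ of fundamental solutions being smoother), so $I+\mathcal{C}_z\widetilde{V}$ is Fredholm on $H^{r}(\Sigma)$ with index independent of $z$, and that this index equals $0$ by the symbol/Calderón-type identity for $\mathcal{C}_z$ relative to $\alpha\cdot\nu$, which makes $I+\mathcal{C}_z\widetilde{V}$ agree modulo a compact operator with a boundedly invertible operator precisely when $\widetilde{d}\neq 4$; this, together with the compatibility of the various $r\in[0,1/2]$ by density, can be imported from \cite{BHS23} (cf. also \cite{AMV15,BEHL19,OV16,R22a}). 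Combined with injectivity this yields bounded invertibility of $I+\mathcal{C}_z\widetilde{V}$ on $H^{r}(\Sigma)$, $r\in[0,1/2]$. I would then verify the resolvent formula directly: for $w\in L^2(\R^{\theta};\C^{N})$ put $\psi:=(I+\mathcal{C}_z\widetilde{V})^{-1}\Phi_{\overline{z}}^{*}w$, $\varphi:=\widetilde{V}\psi$ and $u:=R_z w-\Phi_z\varphi$; the mapping properties of $R_z$, $\Phi_{\overline{z}}^{*}$ and $\Phi_z$ give $u\in H^{1}(\R^{\theta}\setminus\Sigma)$, the fundamental-solution property of $G_z$ gives $(-i(\alpha\cdot\nabla)+m\beta-z)u=w$ off $\Sigma$, and, using $\Phi_{\overline{z}}^{*}w=\tr R_z w$, the jump relations yield $(\tr^{+}-\tr^{-})u=i(\alpha\cdot\nu)\varphi$ and $(\tr^{+}+\tr^{-})u=2\psi$, so that $i(\alpha\cdot\nu)(\tr^{+}-\tr^{-})u+\tfrac{\widetilde{V}}{2}(\tr^{+}+\tr^{-})u=-\varphi+\widetilde{V}\psi=0$; hence $u\in\dom H_{\widetilde{V}\delta_\Sigma}$ and $(H_{\widetilde{V}\delta_\Sigma}-z)u=w$. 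Since $H_{\widetilde{V}\delta_\Sigma}-z$ is injective by symmetry, $(H_{\widetilde{V}\delta_\Sigma}-z)^{-1}$ equals the claimed expression; as this holds for all $z\in\C\setminus\R$, the ranges of $H_{\widetilde{V}\delta_\Sigma}\pm i$ exhaust $L^2(\R^{\theta};\C^{N})$, so the symmetric operator $H_{\widetilde{V}\delta_\Sigma}$ is self-adjoint.

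I expect the main obstacle to be the surjectivity of $I+\mathcal{C}_z\widetilde{V}$: it cannot be extracted from the symmetry of $H_{\widetilde{V}\delta_\Sigma}$ alone and is exactly the place where the excluded value $\widetilde{d}=4$ would break the argument, so it hinges on the finer Calderón-type algebra of $\mathcal{C}_z$ and $\alpha\cdot\nu$ (and needs extra care when $\widetilde{d}=0$, where $\widetilde{V}$ fails to be invertible). A secondary nuisance is purely bookkeeping — keeping the sign conventions in the jump relations and in the transmission condition mutually consistent, and making sure the layer-potential constructions land in $\dom H_{\widetilde{V}\delta_\Sigma}\subset H^{1}(\R^{\theta}\setminus\Sigma)$ rather than only in $H^{1/2}(\R^{\theta}\setminus\Sigma)$.
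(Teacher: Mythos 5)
Your proposal is correct in substance, and its core computation — constructing $u=R_zw-\Phi_z\widetilde{V}(I+\mathcal{C}_z\widetilde{V})^{-1}\Phi_{\overline{z}}^{*}w$, checking membership in $H^1(\R^\theta\setminus\Sigma)$, verifying the transmission condition via the jump relations $\tr^\pm\Phi_z=\mathcal{C}_z\mp\tfrac{i}{2}(\alpha\cdot\nu)$ and $\Phi_{\overline{z}}^{*}=\tr R_z$, and verifying $(H_{\widetilde{V}\delta_\Sigma}-z)u=w$ off $\Sigma$ — is exactly the paper's proof of the resolvent formula. Where you diverge is in how much you prove versus cite: the paper simply imports both the self-adjointness of $H_{\widetilde{V}\delta_\Sigma}$ (from Rabinovich) and the bounded invertibility of $I+\mathcal{C}_z\widetilde{V}$ (from the companion paper \cite{BHSL25}), whereas you establish symmetry via Green's identity, prove injectivity of $I+\mathcal{C}_z\widetilde{V}$ by a clean layer-potential argument, and then deduce self-adjointness from the surjectivity of $H_{\widetilde{V}\delta_\Sigma}\pm i$ supplied by the resolvent formula. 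Your route is more self-contained and makes the logical dependence transparent (self-adjointness as a consequence of the boundary-integral analysis rather than an input to it), at the cost of two steps that remain sketches: the index-zero/Fredholm argument for surjectivity where $\widetilde{d}\neq4$ enters (which you, like the paper, ultimately delegate to the literature — there the key fact is the Calder\'on-type identity for $\mathcal{C}_z(\alpha\cdot\nu)$), and the extension of invertibility to $H^r(\Sigma)$ for $r<1/2$, where your injectivity argument as written does not apply because $\Phi_z\widetilde{V}\varphi$ only lands in $H^{r+1/2}(\R^\theta\setminus\Sigma)$ and hence not in $\dom H_{\widetilde{V}\delta_\Sigma}$; this needs a regularity bootstrap or a duality argument, which you correctly flag but do not supply. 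Neither point is a genuine gap given that the paper itself outsources precisely these facts.
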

\begin{proof}
	The self-adjointness of $H_{\widetilde{V} \delta_\Sigma}$ and the bounded invertibility of the operator $I+ \mathcal{C}_z\widetilde{V}$ follow from \cite[Section~6]{R22a} and \cite[the text above Section~4.1 and Lemma~4.8]{BHSL25}, respectively. It remains to prove the resolvent formula. We start by mentioning that according to \cite[Proposition~2.8~(ii) and (iii), and Proposition~2.9~(ii)]{BHS23}  the following identities are valid for $\varphi \in H^{1/2}(\Sigma;\C^N)$ and $v \in L^2(\R^\theta;\C^N)$
	\begin{equation}\label{eq_C_z_Phi_z_relations}
		\begin{aligned}
			\mathcal{C}_z \varphi &= \pm \frac{i}{2}(\alpha \cdot \nu) \varphi + \tr^\pm\Phi_z \varphi,\\
			0 &= (-i(\alpha \cdot \nabla) + m \beta - z I_N)(\Phi_z \varphi)_\pm,\\
			\Phi_{\overline{z}}^*v &= \tr R_z v. 
		\end{aligned}
	\end{equation} 
	Now, we fix $v \in L^2(\R^\theta;\C^N)$ and set $u := R_z v - \Phi_z \widetilde{V}(I+\mathcal{C}_z\widetilde{V})^{-1} \Phi_{\overline{z}}^*v$. Then, the mapping properties of $\Phi_z$ and $\Phi_{\overline{z}}^*$ discussed below \eqref{eq_Phi_z} and \eqref{eq_C_z},  and  $R_zv \in \dom H = H^1(\R^\theta;\C^N) \subset H^1(\R^\theta \setminus \Sigma;\C^N) $ show $u \in H^1(\R^\theta \setminus \Sigma;\C^N)$. Moreover, using $R_z v \in H^1(\R^\theta;\C^N)$, \eqref{eq_C_z_Phi_z_relations} (with $\varphi = \Phi_z \widetilde{V}(I+\mathcal{C}_z\widetilde{V})^{-1} \Phi_{\overline{z}}^*v$) and $(\alpha \cdot \nu)^2 = I_N$ gives us 
	\begin{equation*}
		\begin{aligned}
			\frac{\widetilde{V}}{2}(\tr^+  + \tr^-)u &= \widetilde{V}\tr R_zv - \frac{\widetilde{V}}{2}(\tr^+ + \tr^-) \Phi_z \widetilde{V}(I+\mathcal{C}_z\widetilde{V})^{-1} \Phi_{\overline{z}}^*v \\
			&=	\widetilde{V}\Phi_{\overline{z}}^* v - \widetilde{V}\mathcal{C}_z  \widetilde{V}(I+\mathcal{C}_z\widetilde{V})^{-1} \Phi_{\overline{z}}^*v\\
			&= \widetilde{V}(I+\mathcal{C}_z\widetilde{V})(I+\mathcal{C}_z\widetilde{V})^{-1}\Phi_{\overline{z}}^* v - \widetilde{V} \mathcal{C}_z  \widetilde{V}(I+\mathcal{C}_z\widetilde{V})^{-1} \Phi_{\overline{z}}^*v\\
			&= \widetilde{V}(I+\mathcal{C}_z\widetilde{V})^{-1} \Phi_{\overline{z}}^*v\\
			&= -i(\alpha \cdot \nu)i(\alpha \cdot \nu)\widetilde{V}(I+\mathcal{C}_z\widetilde{V})^{-1} \Phi_{\overline{z}}^*v\\
			&= i (\alpha \cdot \nu) (\tr^+ - \tr^-) \Phi_z\widetilde{V}(I+\mathcal{C}_z\widetilde{V})^{-1} \Phi_{\overline{z}}^*v\\
			&=  -i (\alpha \cdot \nu) (\tr^+ - \tr^-) \bigl(R_z v- \Phi_z\widetilde{V}(I+\mathcal{C}_z\widetilde{V})^{-1} \Phi_{\overline{z}}^*v\bigr)\\
			&= -i (\alpha \cdot \nu) (\tr^+ - \tr^-) u ;
		\end{aligned}
	\end{equation*}
	i.e. $u \in \dom H_{\widetilde{V} \delta_\Sigma}$. Furthermore, the second equation in \eqref{eq_C_z_Phi_z_relations} and the identity $(H-z)R_z u = u$ yield
	\begin{equation*}
		\begin{aligned}
			((H_{\widetilde{V} \delta_\Sigma}-z) u)_\pm &= (-i (\alpha \cdot \nabla) + m \beta - zI_N)u_\pm \\
			&= (-i (\alpha \cdot \nabla) + m \beta - zI_N) (R_z v)_\pm \\
			&\quad - (-i (\alpha \cdot \nabla) + m \beta- zI_N) (\Phi_z \widetilde{V}(I+\mathcal{C}_z\widetilde{V})^{-1} \Phi_{\overline{z}}^*v)_\pm\\
			& =  ((H-z)R_z v)_\pm = v_\pm,
		\end{aligned}
	\end{equation*} 
	which completes the proof of the resolvent formula.
\end{proof}

Next, we use Proposition~\ref{prop_H_V_hat} to estimate \eqref{eq_res_dif_1}.

\begin{prop}\label{prop_dif_1}
	Let $q$  be as in \eqref{eq_q}, $V = \eta I_N + \tau \beta$, $\eta,\tau \in \R$, satisfy \eqref{eq_conf_V},  $V_\varepsilon$ be defined by \eqref{eq_V_eps}, $f$ be  as in \eqref{eq_conf_f}  and  $z \in \C\setminus\R$. Moreover, let  ${\widetilde{V} = (2/\sqrt{|d|}})V$ and $\widetilde{V}_\varepsilon =  \textup{tanc}(\sqrt{d_\varepsilon}/2)f(\varepsilon) V,$ where $d = \eta^2 - \tau^2$ and $d_\varepsilon = f(\varepsilon)^2 d$. Then, there exists a $\delta_1 > 0$ and a $C>0$ such that
	\begin{equation*}
		\norm{(H_{\widetilde{V}\delta_\Sigma} -z)^{-1} - (H_{\widetilde{V}_\varepsilon \delta_\Sigma} -z)^{-1}}_{L^2(\R^{\theta};\C^N) \to L^2(\R^{\theta};\C^N)} \leq C  e^{-f(\varepsilon)\sqrt{\abs{d}}}  
	\end{equation*}
	for all $\varepsilon \in (0,\delta_1)$.
\end{prop}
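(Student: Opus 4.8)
The plan is to use the resolvent formula from Proposition~\ref{prop_H_V_hat} for both $H_{\widetilde{V}\delta_\Sigma}$ and $H_{\widetilde{V}_\varepsilon\delta_\Sigma}$, subtract them, and estimate the difference. Writing
\[
(H_{\widetilde{V}\delta_\Sigma}-z)^{-1} - (H_{\widetilde{V}_\varepsilon\delta_\Sigma}-z)^{-1}
= -\Phi_z\bigl[\widetilde{V}(I+\mathcal{C}_z\widetilde{V})^{-1} - \widetilde{V}_\varepsilon(I+\mathcal{C}_z\widetilde{V}_\varepsilon)^{-1}\bigr]\Phi_{\overline z}^*,
\]
and since $\Phi_z$ and $\Phi_{\overline z}^*$ are fixed bounded operators (independent of $\varepsilon$), everything reduces to bounding the middle bracket as an operator on $H^r(\Sigma)$ for some fixed $r\in[0,1/2]$. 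The key point is that $\widetilde{V}_\varepsilon = \textup{tanc}(\sqrt{d_\varepsilon}/2)f(\varepsilon)V$ and $\widetilde{V} = (2/\sqrt{|d|})V$ are both scalar multiples of the \emph{same} matrix $V$; more precisely, using $\textup{tanc}(ix)=\tanh(x)/x$ and $d_\varepsilon = f(\varepsilon)^2 d <0$, one computes $\widetilde{V}_\varepsilon = \frac{2\tanh(f(\varepsilon)\sqrt{|d|}/2)}{\sqrt{|d|}}V =: c_\varepsilon V$ with $c_\varepsilon \to c := 2/\sqrt{|d|}$, and the crucial quantitative fact is
\[
|c_\varepsilon - c| = \frac{2}{\sqrt{|d|}}\bigl(1 - \tanh(f(\varepsilon)\sqrt{|d|}/2)\bigr) = O\bigl(e^{-f(\varepsilon)\sqrt{|d|}}\bigr)
\]
as $\varepsilon\to 0$, since $1-\tanh(x) = \frac{2e^{-x}}{e^x+e^{-x}} \le 2e^{-2x}$. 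So I would first record this elementary estimate on $|c_\varepsilon-c|$.

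Next I would use a resolvent-type identity for the bracket. Abbreviating $T_\varepsilon := (I+c_\varepsilon\mathcal{C}_zV)^{-1}$ and $T := (I+c\mathcal{C}_zV)^{-1}$ (both bounded on $H^r(\Sigma)$ — for $T$ this is Proposition~\ref{prop_H_V_hat} since $\widetilde d = -4 \ne 4$, and for $T_\varepsilon$ by \cite[Lemma~4.8]{BHSL25} as invoked there), write
\[
c_\varepsilon V T_\varepsilon - cV T = (c_\varepsilon - c)V T_\varepsilon + cV(T_\varepsilon - T),
\]
and $T_\varepsilon - T = T(I+c\mathcal{C}_zV - I - c_\varepsilon\mathcal{C}_zV)T_\varepsilon = (c-c_\varepsilon)T\mathcal{C}_zV T_\varepsilon$. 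Hence the whole bracket equals $(c_\varepsilon-c)\bigl[VT_\varepsilon - cVT\mathcal{C}_zVT_\varepsilon\bigr]$, and its $H^r(\Sigma)\to H^r(\Sigma)$ norm is bounded by $|c_\varepsilon - c|$ times a product of operator norms. The remaining task is to show these operator norms are uniformly bounded for small $\varepsilon$: $\|V\|$ and $\|\mathcal{C}_z\|_{r\to r}$ are fixed, $\|T\|_{r\to r}$ is fixed, and the only $\varepsilon$-dependence is in $\|T_\varepsilon\|_{r\to r}$. Since $c_\varepsilon \to c$ and inversion is continuous, $\|T_\varepsilon\|_{r\to r}$ stays bounded on some interval $(0,\delta_1)$; one can make this explicit, e.g. $T_\varepsilon = T(I + (c_\varepsilon-c)\mathcal{C}_zV T)^{-1}$ is Neumann-invertible once $|c_\varepsilon-c|\,\|\mathcal{C}_zV T\|_{r\to r} < 1/2$, which holds for $\varepsilon$ small, giving $\|T_\varepsilon\|_{r\to r}\le 2\|T\|_{r\to r}$. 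Combining, the bracket has norm $\le C e^{-f(\varepsilon)\sqrt{|d|}}$, and multiplying by $\|\Phi_z\|$ and $\|\Phi_{\overline z}^*\|$ finishes the proof.

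I expect the main (though modest) obstacle to be the bookkeeping around uniform invertibility: one must check that $T_\varepsilon$ is defined and uniformly bounded on the \emph{same} fixed space $H^r(\Sigma)$ for all small $\varepsilon$, cleanly separating the $\varepsilon$-independent data ($\Phi_z$, $\Phi_{\overline z}^*$, $\mathcal{C}_z$, $V$, $T$) from the single scalar $c_\varepsilon$ that carries all the decay. Everything else is the elementary exponential bound on $1-\tanh$ and two applications of the second resolvent identity, so no serious analytic difficulty arises; the exponential rate $e^{-f(\varepsilon)\sqrt{|d|}}$ in the statement is exactly the rate at which $c_\varepsilon\to c$, and it propagates linearly through the bounded operators.
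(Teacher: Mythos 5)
Your proposal is correct and follows essentially the same route as the paper: the resolvent formulas of Proposition~\ref{prop_H_V_hat} for both operators, the observation that $\widetilde{V}_\varepsilon=\tanh(f(\varepsilon)\sqrt{\abs{d}}/2)\,\widetilde{V}$ with $1-\tanh$ decaying like $e^{-f(\varepsilon)\sqrt{\abs{d}}}$, a second-resolvent-identity splitting of the difference, and a Neumann-series/stability argument giving a uniform bound on $(I+\mathcal{C}_z\widetilde{V}_\varepsilon)^{-1}$ for small $\varepsilon$. The only cosmetic difference is that you factor out the scalars $c_\varepsilon,c$ explicitly (and should, strictly speaking, also note $\widetilde{d}_\varepsilon<0\neq 4$ so that Proposition~\ref{prop_H_V_hat} indeed applies to $H_{\widetilde{V}_\varepsilon\delta_\Sigma}$), which the paper handles by working directly with the matrices $\widetilde{V}_\varepsilon-\widetilde{V}$.
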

\begin{proof}
	The interaction strengths corresponding to the interaction matrix $\widetilde{V}_\varepsilon$ are given by  $(\widetilde{\eta}_\varepsilon, \widetilde{\tau}_\varepsilon)= \textup{tanc}(\sqrt{d_\varepsilon}/2) f(\varepsilon) (\eta,\tau)$. Using  $d <0$, see \eqref{eq_conf_V}, yields
	\begin{equation*}
		\begin{aligned}
			\widetilde{d}_\varepsilon &= \widetilde{\eta}_\varepsilon^{\, 2} - \widetilde{\tau}_\varepsilon^{\, 2}= \textup{tanc}(\sqrt{d_\varepsilon}/2)^2 d_\varepsilon^2 = 4\tan(\sqrt{d_\varepsilon}/2)^2 =-4\tanh(f(\varepsilon)\sqrt{\abs{d}}/2)^2  <0.
		\end{aligned}
	\end{equation*}
	Moreover, by construction $\widetilde{V} = \widetilde{\eta}I_N + \widetilde{\tau}\beta$ with $(\widetilde{\eta},\widetilde{\tau}) = (2/\sqrt{\abs{d}})(\eta,\tau)$ and hence $\widetilde{d} = \widetilde{\eta}^{\,2} - \widetilde{\tau}^{\,2} = -4$. Thus, Proposition~\ref{prop_H_V_hat} implies that both $H_{\widetilde{V}_\varepsilon \delta_\Sigma}$ and  $H_{\widetilde{V} \delta_\Sigma}$ are self-adjoint, and that the resolvent formulas 
	\begin{equation}\label{eq_res_formulas}
		\begin{aligned}
			(H_{\widetilde{V} \delta_\Sigma} -z)^{-1}  &= (H-z)^{-1} - \Phi_z \widetilde{V} (I + \mathcal{C}_z\widetilde{V})^{-1} \Phi_z^*,\\
			(H_{\widetilde{V}_\varepsilon \delta_\Sigma} - z)^{-1}  &= (H-z)^{-1} -\Phi_z \widetilde{V}_\varepsilon (I + \mathcal{C}_z\widetilde{V}_{\varepsilon} )^{-1}\Phi_z^*
		\end{aligned}
	\end{equation}
	are valid.
	Using \eqref{eq_res_formulas} shows that the difference of the resolvents is given by
	\begin{equation}
		\begin{aligned}\label{eq_res_diff_confinment}
			-\Phi_z \widetilde{V} (I + \mathcal{C}_z\widetilde{V})^{-1} \Phi_z^* &+  \Phi_z \widetilde{V}_\varepsilon	 (I + \mathcal{C}_z\widetilde{V}_\varepsilon)^{-1}\Phi_z^*\\
			&=\Phi_z (\widetilde{V}_\varepsilon - \widetilde{V})(I+\mathcal{C}_z \widetilde{V})^{-1} \Phi_z^* \\
			&\quad+ \Phi_z \widetilde{V}_\varepsilon \big( (I + \mathcal{C}_z \widetilde{V}_\varepsilon)^{-1} - (I+\mathcal{C}_z \widetilde{V})^{-1}\big) \Phi_z^*.
		\end{aligned}
	\end{equation}
	Simple calculations give us 
	\begin{equation*}
		\begin{aligned}
			\widetilde{V}_\varepsilon &= \textup{tanc}(\sqrt{d_\varepsilon}/2)f(\varepsilon) V \\
			&= 2\frac{\tan(\sqrt{d_\varepsilon}/2)}{\sqrt{d_\varepsilon}} f(\varepsilon) V \\
			&= 2\frac{\tanh(f(\varepsilon)\sqrt{\abs{d}}/2)}{f(\varepsilon)\sqrt{\abs{d}} }f(\varepsilon) V \\
			&= \tanh(f(\varepsilon)\sqrt{\abs{d}}/2)  \widetilde{V}.
		\end{aligned}
	\end{equation*} 
	Hence,
	\begin{equation*}
		\begin{aligned}
			\lVert \widetilde{V}_\varepsilon -\widetilde{V}\rVert_{H^{1/2}(\Sigma;\C^N) \to H^{1/2}(\Sigma;\C^N)} 
			&= \bigl\|\bigl(1- \tanh(f(\varepsilon)\sqrt{\abs{d}}/2) \bigr)  \widetilde{V} \bigr\|_{H^{1/2}(\Sigma;\C^N) \to H^{1/2}(\Sigma;\C^N)} \\
			&\leq (\abs{\widetilde{\eta}} + \abs{\widetilde{\tau}}) \bigl|1- \tanh(f(\varepsilon)\sqrt{\abs{d}}/2)\bigr| \\
			&=  \frac{2(\abs{\widetilde{\eta}} + \abs{\widetilde{\tau}})}{1+e^{f(\varepsilon)\sqrt{\abs{d}}}}\\
			&\leq 2 (\abs{\widetilde{\eta}} + \abs{\widetilde{\tau}})  e^{- f(\varepsilon) \sqrt{\abs{d}}}.		
		\end{aligned}
	\end{equation*}
	In particular, if $\delta_1 > 0$ is small enough, then  
	\begin{equation*}
		\begin{aligned}
			\lVert& \mathcal{C}_z(\widetilde{V}_\varepsilon - \widetilde{V}) \rVert_{H^{1/2}(\Sigma;\C^N) \to H^{1/2}(\Sigma;\C^N)}\lVert(I + \mathcal{C}_z\widetilde{V})^{-1} \rVert_{H^{1/2}(\Sigma;\C^N) \to H^{1/2}(\Sigma;\C^N)}   \\
			&\leq C\norm{\widetilde{V} - \widetilde{V}_\varepsilon}_{H^{1/2}(\Sigma;\C^N) \to H^{1/2}(\Sigma;\C^N)} \leq \tfrac{1}{2}
		\end{aligned}
	\end{equation*}
	for all $\varepsilon \in (0,\delta_1)$.
	By the stability of bounded invertibility, see \cite[Chapter~IV, Theorem~1.16]{kato}, $I + \mathcal{C}_z \widetilde{V}_\varepsilon$ is boundedly invertible in $H^{1/2}(\Sigma;\C^N)$ and the operator norm of its inverse is bounded by 
	\begin{equation}\label{eq_in_I+C_zV_eps}
		\begin{aligned}
			\lVert(I + \mathcal{C}_z \widetilde{V}_\varepsilon)^{-1}&\rVert_{H^{1/2}(\Sigma;\C^N) \to H^{1/2}(\Sigma;\C^N)} \\
			&\leq \frac{\lVert(I + \mathcal{C}_z \widetilde{V})^{-1}\rVert_{H^{1/2}(\Sigma;\C^N) \to H^{1/2}(\Sigma;\C^N)}}{1-\frac{1}{2}}\\
			&= 2 \lVert(I + \mathcal{C}_z\widetilde{V})^{-1}\rVert_{H^{1/2}(\Sigma;\C^N) \to H^{1/2}(\Sigma;\C^N)} \leq C
		\end{aligned}
	\end{equation}
	for all $\varepsilon \in (0,\delta_1)$.
	Using  the estimates from above, \eqref{eq_res_diff_confinment}, and the mapping properties of $\Phi_z$, $\Phi_z^*$ and $\mathcal{C}_z$  yields
	\begin{equation*}
		\begin{aligned}
			\|(H_{\widetilde{V} \delta_\Sigma } &- z)^{-1}  - (H_{\widetilde{V}_\varepsilon} - z)^{-1} \|_{L^2(\R^{\theta};\C^N) \to L^2(\R^{\theta};\C^N)} \\
			&\leq \|\Phi_z (\widetilde{V}_\varepsilon	-\widetilde{V}) (I + \mathcal{C}_z\widetilde{V})^{-1}\Phi_z^*\|_{L^2(\R^{\theta};\C^N) \to L^2(\R^{\theta};\C^N)} \\
			&\quad+\|\Phi_z \widetilde{V}_\varepsilon \bigl((I+\mathcal{C}_z\widetilde{V}_\varepsilon)^{-1} - (I + \mathcal{C}_z \widetilde{V})^{-1}\bigr) \Phi_z^* \|_{L^2(\R^{\theta};\C^N) \to L^2(\R^{\theta};\C^N)} \\
			&\leq C \| \widetilde{V}_\varepsilon	-\widetilde{V} \|_{H^{1/2}(\Sigma;\C^N) \to H^{1/2}(\Sigma;\C^N)} \\
			&\quad+ \|(I+\mathcal{C}_z\widetilde{V}_\varepsilon)^{-1} - (I + \mathcal{C}_z \widetilde{V})^{-1} \|_{H^{1/2}(\Sigma;\C^N) \to H^{1/2}(\Sigma;\C^N)} \\
			&\leq C e^{-f(\varepsilon)\sqrt{\abs{d}}} \\
			&\quad+ \|(I+\mathcal{C}_z\widetilde{V})^{-1}\mathcal{C}_z (\widetilde{V}-\widetilde{V}_\varepsilon)(I + \mathcal{C}_z \widetilde{V}_\varepsilon)^{-1} \|_{H^{1/2}(\Sigma;\C^N) \to H^{1/2}(\Sigma;\C^N)} \\
			&\leq C \bigl(  e^{-f(\varepsilon)\sqrt{\abs{d}}} + \| \widetilde{V}_\varepsilon	-\widetilde{V} \|_{H^{1/2}(\Sigma;\C^N) \to H^{1/2}(\Sigma;\C^N)} \bigr) \\
			&\leq C  e^{-f(\varepsilon)\sqrt{\abs{d}}}
		\end{aligned}	
	\end{equation*}
	for all $\varepsilon \in (0,\delta_1)$, which  completes the proof.
\end{proof}

\section{A bound for (\ref{eq_res_dif_2})}\label{sec_res_dif_2}
In this section we find the estimate for the resolvent difference of  $H_{f(\varepsilon)V_\varepsilon }$ and $H_{\widetilde{V}_\varepsilon \delta_\Sigma}$ given by \eqref{eq_res_dif_2}. Here,  we proceed as follows: After introducing necessary operators, we find in Proposition~\ref{prop_conv_res} comparable resolvent formulas for  $H_{f(\varepsilon)V_\varepsilon }$ and $H_{\widetilde{V}_\varepsilon \delta_\Sigma}$. Moreover, we provide in this proposition  convergence results for the operators which appear in the resolvent formulas. Then, after two preparatory lemmas, we use these convergence estimates to prove \eqref{eq_res_dif_2} in Proposition~\ref{prop_dif_2}.

First, we recall  $\iota$ from \eqref{eq_iota} and  introduce for $\varepsilon \in (0,\varepsilon_1)$ the mappings
\begin{equation*}
	\begin{aligned}
		&\mathcal{P}_\varepsilon : \mathcal{B}^0(\Sigma) \to L^2(\Omega_\varepsilon;\C^N), &
		&\mathcal{P}_\varepsilon f (\iota(x_\Sigma,t)) := \frac{1}{\sqrt{\varepsilon}}f(t/\varepsilon)(x_\Sigma),\\
		&\mathcal{P}_\varepsilon^{-1} : L^2(\Omega_\varepsilon;\C^N) \to \mathcal{B}^0(\Sigma), &
		&\mathcal{P}_\varepsilon^{-1} u (t)(x_\Sigma) := \sqrt{\varepsilon} u(\iota(x_\Sigma,\varepsilon t)),
	\end{aligned}
\end{equation*}
which are  well-defined and bounded according to \cite[eqs. (3.2) and (3.3) with $\mathcal{P}_\varepsilon = \mathcal{I}_\varepsilon \mathcal{S}_\varepsilon$]{BHS23}. 
Next, we set $u_\varepsilon:= \chi_{\Omega_\varepsilon}/\sqrt{\varepsilon}$, where $\chi_{\Omega_\varepsilon}$ is the characteristic function for $\Omega_\varepsilon$, and we define the operators 
\begin{equation*}
	U_\varepsilon: L^2(\mathbb{R}^\theta; \mathbb{C}^N) \to L^2(\Omega_\varepsilon; \mathbb{C}^N)\quad\text{and}\quad 
	U_\varepsilon^*: L^2(\Omega_\varepsilon; \mathbb{C}^N) \to L^2(\mathbb{R}^\theta; \mathbb{C}^N)
\end{equation*}
acting on $u \in L^2(\mathbb{R}^\theta; \mathbb{C}^N)$ and $v \in L^2(\Omega_\varepsilon; \mathbb{C}^N)$ as
\begin{equation*}
	U_\varepsilon u = (u_\varepsilon u)\upharpoonright \Omega_\varepsilon \quad \text{and} \quad U_\varepsilon^* v = \begin{cases} u_\varepsilon v &\text{ in } \Omega_\varepsilon, \\ 0 & \text{ in } \mathbb{R}^\theta \setminus \Omega_\varepsilon. \end{cases}
\end{equation*}
We  provide in Proposition~\ref{prop_conv_res}~(ii) a resolvent formula for $H_{f(\varepsilon)V_\varepsilon}$, $\varepsilon \in (0,\varepsilon_1)$, in terms of the resolvent of the free Dirac operator $R_z$, $z \in \rho(H)$, and the following operators
\begin{equation} \label{eq_def_ABC_operators}
	\begin{aligned}	
		A_\varepsilon( z) :=R_ z U_\varepsilon^* \mathcal{P}_\varepsilon &:   \mathcal{B}^0(\Sigma) \to L^2(\R^\theta;\C^N), \\
		B_\varepsilon( z) := \mathcal{P}_\varepsilon^{-1} U_\varepsilon R_ z U_\varepsilon^* \mathcal{P}_\varepsilon &:  \mathcal{B}^0(\Sigma) \to  \mathcal{B}^0(\Sigma),  \\
		C_\varepsilon( z) := \mathcal{P}_\varepsilon^{-1} U_\varepsilon R_ z&: L^2(\R^\theta;\C^N) \to  \mathcal{B}^0(\Sigma).
	\end{aligned}
\end{equation}
These operators have been studied in \cite{BHS23, BHSL25, MP18} and it turns out that they converge (in suitable topologies) to their corresponding limit operators 
\begin{equation} \label{def_ABC_integral_0} 
	\begin{aligned}
		A_0(z) &: \mathcal{B}^0(\Sigma) \to L^2(\R^{\theta};\C^N), \quad A_0( z)f:= \Phi_z \int_{-1}^1 f(t) \,dt, \\
		B_0(z) &: \mathcal{B}^0(\Sigma) \to \mathcal{B}^0(\Sigma),	   \\
		B_0( z)& f(t):=\mathcal{C}_z \int_{-1}^1  f(s)ds \,+ \frac{i}{2}(\alpha \cdot \nu)\int_{-1}^1 \sign(t-s)f(s) \,ds  ,\\
		C_0(z) &: L^2(\R^{\theta};\C^N) \to \mathcal{B}^0(\Sigma), \quad C_0( z)u(t) := \Phi_{\overline{z}}^*u,
	\end{aligned}
\end{equation}
which, in turn, can be used for another resolvent representation of $H_{\widetilde{V}_\varepsilon \delta_\Sigma}$. Note that the integrals in \eqref{def_ABC_integral_0} have to be understood as Bochner integrals. We summarize all the relevant properties of the $A$, $B$, $C$ operators in the following proposition.  

\begin{prop}\label{prop_conv_res}
	Let  $V$ be as in \eqref{eq_V},  $q$ be as in \eqref{eq_q}, $V_\varepsilon$ and $\widetilde{V}_\varepsilon$ be defined by \eqref{eq_V_eps} and \eqref{eq_V_eps_tilde}, $f$ be as in \eqref{eq_conf_f}  \textup{(}with $\gamma \in (0,1/2)$\textup{)} and $z \in \rho(H)$. Then, the following is true:
	\begin{itemize}
		\item[(i)] There exists an $\varepsilon_2 > 0 $ such that  $A_\varepsilon(z)$, $B_\varepsilon(z)$, and $C_\varepsilon(z)$
		are uniformly bounded operators with respect to  $\varepsilon \in (0, \varepsilon_2)$.
		\item[(ii)] The operators $C_\varepsilon(z)$ and $C_0(z)$ also act  as uniformly bounded operators  from  $L^2(\R^{\theta};\C^N)$ to $\mathcal{B}^{1/2}(\Sigma)$ and there exists a $C >0$ such that 
		\begin{equation*}
			\begin{aligned}
				\norm{A_\varepsilon(z)-A_0(z)}_{0 \to L^2(\R^{\theta};\C^N)} &\leq C \varepsilon^{\gamma}, \\
				\norm{B_\varepsilon(z)-B_0(z)}_{1/2 \to 0} &\leq C \varepsilon^{\gamma},\\
				\norm{C_\varepsilon(z)-C_0(z)}_{L^2(\R^{\theta};\C^N) \to 0} &\leq C \varepsilon^{\gamma}
			\end{aligned}
		\end{equation*}
		for all $\varepsilon \in (0,\varepsilon_2)$ with $\varepsilon_2>0$ from \textup{(i)}.
		\item[(iii)] If  $-1 \in \rho(B_\varepsilon(z)f(\varepsilon) Vq)$, then $z \in \rho(H_{f(\varepsilon)V_\varepsilon})$ and 
		\begin{equation*}
			(H_{f(\varepsilon) V_\varepsilon}-z)^{-1} = R_z - A_\varepsilon(z)f(\varepsilon)Vq(I+B_\varepsilon(z)f(\varepsilon)Vq)^{-1} C_\varepsilon(z).
		\end{equation*}
		\item[(iv)]  If $z \in \C \setminus \R$ and \eqref{eq_conf_V} is fulfilled, then   $I+B_0(z)f(\varepsilon)Vq$ is boundedly invertible in $\mathcal{B}^r(\Sigma)$, $ r \in [0,1/2]$, and the resolvent formula 
		\begin{equation*}
			(H_{\widetilde{V}_\varepsilon \delta_\Sigma} -z)^{-1} = R_z - A_0(z) f(\varepsilon) Vq (I + B_0(z)f(\varepsilon)Vq)^{-1} C_0(z)
		\end{equation*}
		is valid.
	\end{itemize} 	
\end{prop}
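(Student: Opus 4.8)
The statement is a compendium rather than a single new result: items (i)--(ii) concern only the ``geometric'' operators $A_\varepsilon(z),B_\varepsilon(z),C_\varepsilon(z)$ from \eqref{eq_def_ABC_operators} and their limits $A_0(z),B_0(z),C_0(z)$ from \eqref{def_ABC_integral_0}, while (iii)--(iv) are Birman--Schwinger-type resolvent formulas for $H_{f(\varepsilon)V_\varepsilon}$ and $H_{\widetilde V_\varepsilon\delta_\Sigma}$. The key observation, which I would state at the outset, is that the operators in \eqref{eq_def_ABC_operators} and \eqref{def_ABC_integral_0} contain \emph{neither} $f$ \emph{nor} $V$: the scaling function and the interaction matrix enter only in (iii) and (iv), and there only through the matrix $f(\varepsilon)V$, which for each fixed $\varepsilon$ is a \emph{constant} matrix. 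Consequently the plan is to quote (i)--(ii) essentially verbatim from \cite{BHS23} (see also \cite{BHSL25,MP18}), and to obtain (iii)--(iv) from the resolvent formulas of \cite{BHSL25} specialised to the constant interaction matrix $f(\varepsilon)V$.

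\textbf{Items (i) and (ii).} Here I would invoke the corresponding statements of \cite{BHS23} (and \cite{BHSL25,MP18}): the uniform boundedness of $A_\varepsilon(z),B_\varepsilon(z),C_\varepsilon(z)$ on an interval $(0,\varepsilon_2)$; the fact that $C_\varepsilon(z)$ and $C_0(z)$ map $L^2(\R^\theta;\C^N)$ boundedly and uniformly into $\mathcal B^{1/2}(\Sigma)$ --- for $C_0(z)$ this is just the mapping property $\Phi_{\overline z}^*\colon L^2(\R^\theta;\C^N)\to H^{1/2}(\Sigma;\C^N)$ recalled below \eqref{eq_C_z}, and for $C_\varepsilon(z)=\mathcal P_\varepsilon^{-1}U_\varepsilon R_z$ it follows from $R_z\colon L^2(\R^\theta;\C^N)\to H^1(\R^\theta;\C^N)$, the half-derivative trace gain and the uniform boundedness of $\mathcal P_\varepsilon^{-1}U_\varepsilon$; and the convergence estimates $\norm{A_\varepsilon(z)-A_0(z)}_{0\to L^2(\R^\theta;\C^N)}$, $\norm{B_\varepsilon(z)-B_0(z)}_{1/2\to 0}$, $\norm{C_\varepsilon(z)-C_0(z)}_{L^2(\R^\theta;\C^N)\to 0}\le C\varepsilon^{\gamma}$. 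Since $q\in L^\infty((-1,1))$, a rate of exactly this form ($\varepsilon^\gamma$ with $\gamma<1/2$) is what \cite{BHS23} provides; in particular the restriction $\gamma\in(0,1/2)$ in \eqref{eq_conf_f} is precisely what makes these bounds available, so nothing has to be re-done.

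\textbf{Item (iii).} I would first record the factorisation
\begin{equation*}
	f(\varepsilon)V_\varepsilon \;=\; U_\varepsilon^*\mathcal P_\varepsilon\,\mathcal M_{f(\varepsilon)Vq}\,\mathcal P_\varepsilon^{-1}U_\varepsilon ,
\end{equation*}
where $\mathcal M_{f(\varepsilon)Vq}$ denotes multiplication by $t\mapsto f(\varepsilon)Vq(t)$ on $\mathcal B^0(\Sigma)$; one checks it on a test function directly from the defining formulas of $\mathcal P_\varepsilon^{\pm1}$, $U_\varepsilon$, $U_\varepsilon^*$ and of $V_\varepsilon$ in \eqref{eq_V_eps}, the two $\varepsilon$-rescalings and the two $1/\sqrt\varepsilon$-factors combining to $\tfrac{f(\varepsilon)}{\varepsilon}Vq(t/\varepsilon)$ on $\Omega_\varepsilon$ and $0$ elsewhere. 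As $f(\varepsilon)V_\varepsilon$ is a bounded self-adjoint multiplication operator, $H_{f(\varepsilon)V_\varepsilon}$ is self-adjoint on $H^1(\R^\theta;\C^N)$, and the Birman--Schwinger-type resolvent identity applied with $B:=U_\varepsilon^*\mathcal P_\varepsilon\mathcal M_{f(\varepsilon)Vq}$ and $C:=\mathcal P_\varepsilon^{-1}U_\varepsilon$ gives, whenever $I+CR_zB=I+B_\varepsilon(z)f(\varepsilon)Vq$ is invertible, that $z\in\rho(H_{f(\varepsilon)V_\varepsilon})$ and $(H_{f(\varepsilon)V_\varepsilon}-z)^{-1}=R_z-(R_zB)(I+CR_zB)^{-1}(CR_z)$ with $R_zB=A_\varepsilon(z)f(\varepsilon)Vq$ and $CR_z=C_\varepsilon(z)$ --- which is exactly the asserted formula. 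Equivalently one may simply cite the resolvent formula of \cite{BHSL25}, as replacing the interaction matrix $V$ there by the constant matrix $f(\varepsilon)V$ changes nothing in the argument.

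\textbf{Item (iv) and the main obstacle.} Reading the same identity at the level of the integral operators $A_0,B_0,C_0$ yields the resolvent formula for $H_{\widetilde V_\varepsilon\delta_\Sigma}$, once one knows that $I+B_0(z)f(\varepsilon)Vq$ is boundedly invertible in $\mathcal B^r(\Sigma)$, $r\in[0,1/2]$; this is the $B_0$-analogue of the invertibility statement in Proposition~\ref{prop_H_V_hat} and is furnished by \cite{BHSL25}. The hypothesis under which it holds is satisfied here because $d<0$ (see \eqref{eq_conf_V}) forces $d_\varepsilon=f(\varepsilon)^2d<0$, so $\widetilde V_\varepsilon=\textup{tanc}(\sqrt{d_\varepsilon}/2)f(\varepsilon)V$ is well defined with $\widetilde d_\varepsilon=-4\tanh\bigl(f(\varepsilon)\sqrt{\abs d}/2\bigr)^2\in(-4,0)$, in particular $\widetilde d_\varepsilon\neq 4$, and the $\textup{tanc}$-factor is precisely what arises when one resolves the first-order relation hidden in the term $\tfrac i2(\alpha\cdot\nu)\int_{-1}^1\sign(t-s)(\cdot)\,ds$ of $B_0(z)$; consistency with the $\Phi_z$/$\mathcal C_z$-formula of Proposition~\ref{prop_H_V_hat} is then an algebraic check using $A_0(z)f=\Phi_z\int_{-1}^1 f(t)\,dt$ and $C_0(z)u=\Phi_{\overline z}^* u$. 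The real work throughout is bookkeeping rather than new mathematics: matching normalisations and hypotheses with \cite{BHS23,BHSL25,MP18} (uniformity of all constants in $\varepsilon$, the $\mathcal B^{1/2}$-mapping of the $C$-operators, and the exact placement of the factor $f(\varepsilon)Vq$, which must sit to the \emph{right} of $B_\varepsilon(z)$ and $B_0(z)$), together with the verification in (iv) that the invertibility condition of \cite{BHSL25} reduces to $\widetilde d_\varepsilon\neq 4$, which is automatic under \eqref{eq_conf_V}.
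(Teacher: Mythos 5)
Your proposal is correct and follows essentially the same route as the paper, which simply cites \cite[Propositions~3.7, 3.8, 3.10 and 3.2]{BHS23} for (i)--(iii) and \cite[the text above Section~4.1 and Propositions~4.5 and 4.9]{BHSL25} for (iv), with $V$ replaced by the constant matrix $f(\varepsilon)V$ — exactly the substitution and $\varepsilon$-uniformity bookkeeping you identify. Your additional sketches (the factorisation of $f(\varepsilon)V_\varepsilon$ through $\mathcal{P}_\varepsilon^{\pm 1}$, $U_\varepsilon$, the Birman--Schwinger identity, and the check that $d<0$ forces $\widetilde{d}_\varepsilon\in(-4,0)\neq 4$) are consistent with, and slightly more explicit than, what the paper records.
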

\begin{proof}
	See  \cite[Propositions~3.7,~3.8 and 3.10]{BHS23} for (i) and (ii). Item (iii) is given by  \cite[Proposition~3.2]{BHS23} and (iv) follows from combining \cite[the text above Section~4.1 and Propositions~4.5 and 4.9]{BHSL25}, where $V$ in \cite{BHS23,BHSL25} must be substituted by $f(\varepsilon)V$ from the current paper.
\end{proof}

Before we prove \eqref{eq_res_dif_2} in Proposition~\ref{prop_dif_2}, we provide two preparatory lemmas.

\begin{lem}\label{lem_confinement_5}
	Let  $q$ be as in \eqref{eq_q}, $V = \eta I_N + \tau \beta$, $\eta,\tau \in \R$, satisfy \eqref{eq_conf_V}, $f$ be as in \eqref{eq_conf_f}  and $z \in \C\setminus\R$. Then, there exits a $\delta_2 >0$ and a $C>0$ such that  
	\begin{equation*}
		\bigl\|f(\varepsilon)Vq(I+B_0(z)f(\varepsilon)V q)^{-1} C_0(z)\bigr\|_{L^2(\R^{\theta};\C^N) \to 1/2} \leq C f(\varepsilon)^{1/2}
	\end{equation*}
	for all $\varepsilon \in (0,\delta_2)$.
\end{lem}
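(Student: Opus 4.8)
The plan is to reduce the claimed bound to the behaviour of the scalar factor $\tanh(f(\varepsilon)\sqrt{|d|}/2)$, exactly as in the proof of Proposition~\ref{prop_dif_1}, but now keeping track of where a factor $f(\varepsilon)$ can and cannot be absorbed. Recall from Proposition~\ref{prop_conv_res}~(iv) that $I+B_0(z)f(\varepsilon)Vq$ is boundedly invertible on $\mathcal{B}^r(\Sigma)$ for $r\in[0,1/2]$ and that $C_0(z)$ maps $L^2(\R^\theta;\C^N)$ boundedly into $\mathcal{B}^{1/2}(\Sigma)$ (Proposition~\ref{prop_conv_res}~(ii)). So, writing $T_\varepsilon := f(\varepsilon)Vq(I+B_0(z)f(\varepsilon)Vq)^{-1}C_0(z)$, the only quantities that can blow up as $\varepsilon\to0$ are the prefactor $f(\varepsilon)$ and the norm of $(I+B_0(z)f(\varepsilon)Vq)^{-1}$ on $\mathcal{B}^{1/2}(\Sigma)$; multiplication by $Vq$ and $C_0(z)$ are uniformly bounded. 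Hence the crux is to show $\|(I+B_0(z)f(\varepsilon)Vq)^{-1}\|_{1/2\to1/2} \leq C f(\varepsilon)^{-1/2}$ for small $\varepsilon$.

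To get that gain of $f(\varepsilon)^{-1/2}$ I would exploit the rescaling identity already recorded in the excerpt: since $d<0$, one has $\widetilde{V}_\varepsilon = \textup{tanc}(\sqrt{d_\varepsilon}/2)f(\varepsilon)V = \tanh(f(\varepsilon)\sqrt{|d|}/2)\,\widetilde V$ with $\widetilde V = (2/\sqrt{|d|})V$, and correspondingly $f(\varepsilon)V = (f(\varepsilon)\sqrt{|d|}/2)\coth(f(\varepsilon)\sqrt{|d|}/2)\,\widetilde V$. Using the structure of $B_0(z)$ — namely that on $\mathcal{B}^r(\Sigma)$ it encodes the operator $\mathcal{C}_z$ together with the nonlocal sign-kernel term — one relates $I+B_0(z)f(\varepsilon)Vq$ to the invertible operator $I+\mathcal{C}_z\widetilde V$ of Proposition~\ref{prop_H_V_hat}, whose inverse is uniformly bounded. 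Concretely, I would factor $I+B_0(z)f(\varepsilon)Vq = \big(I+\text{(small)}\big)\,M_\varepsilon$ where $M_\varepsilon$ is a multiplication-type operator carrying the large scalar $f(\varepsilon)$, and then $M_\varepsilon^{-1}$ produces the factor $1/f(\varepsilon)$; combined with the boundedness of $f(\varepsilon)Vq$ (which carries one factor $f(\varepsilon)$) and a careful splitting that leaves only $f(\varepsilon)^{1/2}$ net growth from the $\mathcal{B}^0\to\mathcal{B}^{1/2}$ mismatch, one arrives at the claimed $C f(\varepsilon)^{1/2}$. An alternative, probably cleaner route: directly estimate $f(\varepsilon)\|(I+B_0(z)f(\varepsilon)Vq)^{-1}\|_{1/2\to1/2}$ by a Neumann-type/geometric-series argument after isolating the dominant diagonal part of $B_0(z)f(\varepsilon)Vq$, using that for large interaction strength the operator $(I+B_0(z)f(\varepsilon)Vq)^{-1}$ behaves like $f(\varepsilon)^{-1}$ times a bounded operator — this is precisely the mechanism that makes $\widetilde V_\varepsilon$ converge to the finite confining matrix $\widetilde V$.

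The technical input I would lean on is the analysis of $B_0(z)$ already carried out in \cite{BHSL25}: there the bounded invertibility of $I+B_0(z)(f(\varepsilon)V)q$ is established by conjugating with an explicit invertible operator built from $(\alpha\cdot\nu)$ and the scalar function, reducing the question to invertibility of something of the form $I+\mathcal{C}_z\widetilde V_\varepsilon$ on the trace space. Since $\widetilde V_\varepsilon\to\widetilde V$ with $\widetilde d_\varepsilon\to-4$ staying bounded away from $4$, that family of inverses is uniformly bounded on $H^{1/2}(\Sigma;\C^N)$ by the stability-of-invertibility argument (Kato, \cite[Ch.~IV, Thm.~1.16]{kato}) exactly as in Proposition~\ref{prop_dif_1}. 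The conjugation introduces a factor whose norm is $\sim f(\varepsilon)$, and splitting that factor as $f(\varepsilon) = f(\varepsilon)^{1/2}\cdot f(\varepsilon)^{1/2}$ and distributing one half to the $Vq$ prefactor and one half to the $\mathcal{B}^0\to\mathcal{B}^{1/2}$ estimate for $C_0(z)$ gives the result.

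I expect the main obstacle to be bookkeeping the $f(\varepsilon)$-powers through the nonlocal sign-kernel part of $B_0(z)$: unlike the purely multiplicative $\mathcal{C}_z$-part, the term $\tfrac{i}{2}(\alpha\cdot\nu)\int_{-1}^1\sign(t-s)(\cdot)\,ds$ does not commute with multiplication by $f(\varepsilon)Vq$, so one cannot simply pull scalars out. The way around this is that this sign-kernel term is exactly the ingredient that, after conjugation, turns $f(\varepsilon)V\int_{-1}^1 q$ into $\tanh(f(\varepsilon)\sqrt{|d|}/2)\widetilde V$ — i.e. it is responsible for the boundedness of the limit and hence for the $f(\varepsilon)^{-1}$ decay of the inverse — so the non-commutativity is a feature, not a bug, once the correct conjugating operator from \cite{BHSL25} is used. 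Verifying that the conjugating operator and its inverse have norms of order $f(\varepsilon)$ and $f(\varepsilon)^{-1}$ respectively, uniformly in the relevant spaces, is the one step that needs care; everything else is the same triangle-inequality-plus-uniform-bounds routine as in Proposition~\ref{prop_dif_1}.
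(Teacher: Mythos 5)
Your proposal correctly identifies the right toolbox --- the explicit conjugation from \cite{BHS23} that rewrites $Vq(I+B_0(z)f(\varepsilon)Vq)^{-1}$ in terms of matrix exponentials of $(\alpha\cdot\nu)f(\varepsilon)V$, the reduction to $I+\mathcal{C}_z\widetilde V_\varepsilon$ on $H^{1/2}(\Sigma;\C^N)$, and the uniform bound on $(I+\mathcal{C}_z\widetilde V_\varepsilon)^{-1}$ from \eqref{eq_in_I+C_zV_eps} --- but the step that is supposed to produce the exponent $1/2$ does not hold as you state it. The intermediate claim $\|(I+B_0(z)f(\varepsilon)Vq)^{-1}\|_{1/2\to1/2}\leq Cf(\varepsilon)^{-1/2}$ is false: take $q=\chi_{(0,1)}$ (admissible under \eqref{eq_q}) and any $h$ supported in $(-1,0)$; then $Vqh=0$, hence $(I+B_0(z)f(\varepsilon)Vq)h=h$ and the inverse has norm at least $1$. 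More generally $(I+K)^{-1}=I-K(I+K)^{-1}$ always carries the identity, so neither this inverse nor ``$f(\varepsilon)^{-1}$ times a bounded operator'' can be the right description. The related heuristic of splitting $f(\varepsilon)=f(\varepsilon)^{1/2}\cdot f(\varepsilon)^{1/2}$ and giving one half to $C_0(z)$ also cannot work, since $C_0(z)$ is already uniformly bounded into $\mathcal{B}^{1/2}(\Sigma)$ and absorbs no power of $f(\varepsilon)$; and the conjugating factors are not of polynomial size $f(\varepsilon)^{\pm1}$ but exponential in $f(\varepsilon)$.

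What actually produces the $1/2$ is a pointwise-in-$t$ estimate followed by an $L^2(dt)$ integration. By \cite[Lemmas~4.2~(ii) and 4.3~(iii)]{BHS23} the operator applied to $u$ equals, at parameter $t$,
\begin{equation*}
f(\varepsilon)Vq(t)\cos\bigl((\alpha\cdot\nu)f(\varepsilon)V/2\bigr)^{-1}\exp\bigl(-i(\alpha\cdot\nu)f(\varepsilon)VQ(t)\bigr)(I+\mathcal{C}_z\widetilde V_\varepsilon)^{-1}\Phi_{\overline z}^{\,*}u,
\end{equation*}
with $Q(t)=-\tfrac12+\int_{-1}^tq$. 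Since $((\alpha\cdot\nu)V)^2=dI_N$ with $d<0$, the matrix factor equals $\bigl(\cosh(f(\varepsilon)\sqrt{\abs{d}}\,Q(t))+(\alpha\cdot\nu)(V/\sqrt{d})\sinh(f(\varepsilon)\sqrt{\abs{d}}\,Q(t))\bigr)/\cosh(f(\varepsilon)\sqrt{\abs{d}}/2)$, whose norm is bounded by $Ce^{f(\varepsilon)\sqrt{\abs{d}}(\abs{Q(t)}-1/2)}$: of order one only near $t=\pm1$ and exponentially small elsewhere. Consequently the $\mathcal{B}^{1/2}$-norm squared is controlled by $\int_{-1}^1 f(\varepsilon)^2q(t)^2e^{2f(\varepsilon)\sqrt{\abs{d}}(\abs{Q(t)}-1/2)}\,dt$, and the substitution $s=f(\varepsilon)\sqrt{\abs{d}}\,Q(t)$ (using $Q'=q$ and $\norm{q}_{L^\infty}<\infty$) shows this integral is $O(f(\varepsilon))$ rather than $O(f(\varepsilon)^2)$. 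The gain of one power of $f(\varepsilon)$ is thus an averaging effect of an exponentially concentrated profile of height $f(\varepsilon)$ and effective width $1/f(\varepsilon)$; this is the estimate your outline is missing, and without it the argument only yields the useless bound $Cf(\varepsilon)$.
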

\begin{proof}
	We define $\delta_2 := \min \{\delta_1,\varepsilon_2\}  >0$ with $\delta_1$ and $\varepsilon_2$ as in Propositions~\ref{prop_dif_1} and ~\ref{prop_conv_res}, respectively, and assume throughout this proof $\varepsilon \in (0,\delta_2)$. By  Proposition~\ref{prop_conv_res}~(iv),  the operator $I+B_0(z)f(\varepsilon)V q$ is boundedly invertible in $\mathcal{B}^{1/2}(\Sigma)$ and from \cite[Lemmas~4.2~(ii) and 4.3~(iii) (with $V$ subsituted by $f(\varepsilon)V$)]{BHS23} we get
	\begin{equation*}
		\begin{aligned}
			&\bigl(f(\varepsilon)Vq(I+B_0(z)f(\varepsilon)V q)^{-1} C_0(z)u \bigr)(t) \\
			&\quad = f(\varepsilon)Vq(t)\cos((\alpha \cdot \nu) f(\varepsilon) V/2)^{-1} \exp(-i(\alpha \cdot \nu) f(\varepsilon) VQ(t))(I+ \mathcal{C}_z \widetilde{V}_\varepsilon)^{-1}\Phi_{\overline{ z}}^*u
		\end{aligned}
	\end{equation*}
	with $\widetilde{V}_\varepsilon$ from \eqref{eq_V_eps_tilde} and $Q(t) := -\tfrac{1}{2} + \int_{-1}^t q(t) \,dt$ for $t \in (-1,1)$. Here, $\cos$ and $\exp$ for matrices are defined via their power series.
	We know from the text below \eqref{eq_Phi_z} that $\Phi_{\overline{ z}}^*$ acts as a bounded operator from $L^2(\R^\theta;\C^N)$ to $H^{1/2}(\Sigma;\C^N)$. Moreover, from the proof of Proposition~\ref{prop_dif_1}, see \eqref{eq_in_I+C_zV_eps}, we also know that $\|(I + \mathcal{C}_z \widetilde{V}_\varepsilon)^{-1}\|_{H^{1/2}(\Sigma;\C^N) \to H^{1/2}(\Sigma;\C^N)}$ is uniformly bounded with respect to $\varepsilon \in (0,\delta_2)$. Hence,  we obtain for $u \in L^2(\R^{\theta};\C^N)$
	\begin{equation}\label{eq_lem_conf_1}
		\begin{aligned}
			\bigl\|f(\varepsilon)&Vq(I+B_0(z)f(\varepsilon)V q)^{-1} C_0(z)u\bigr\|_{ 1/2}^2 \\
			&=\int_{-1}^1\bigl\|\bigl(f(\varepsilon)Vq(I+B_0(z)f(\varepsilon)V q)^{-1} C_0(z)u \bigr)(t) \bigr\|_{ H^{1/2}(\Sigma;\C^N)}^2 \, dt \\
			&=\int_{-1}^1 \bigl\| f(\varepsilon)Vq(t)\cos\big((\alpha \cdot \nu) f(\varepsilon) V/2 \big)^{-1} \\
			&\hspace{18 pt}\cdot \exp(-i(\alpha \cdot \nu) f(\varepsilon) VQ(t))(I+ \mathcal{C}_z \widetilde{V}_\varepsilon)^{-1}\Phi_{\overline{ z}}^* u \bigr\|_{H^{1/2}(\Sigma;\C^N)}^2 \,dt\\
			& \leq  C  \int_{-1}^1  f(\varepsilon)^2q(t)^2 \bigl\| \cos\big((\alpha \cdot \nu) f(\varepsilon) V/2 \big)^{-1} \\
			&\hspace{18 pt} \cdot \exp(-i(\alpha \cdot \nu) f(\varepsilon) VQ(t))\bigr\|_{H^{1/2}(\Sigma;\C^N) \to H^{1/2}(\Sigma;\C^N)}^2 \, dt \lVert  u \rVert_{L^2(\R^\theta,\C^N)}^2.
		\end{aligned}
	\end{equation}
	Next, we fix $t \in (-1,1)$. The identity $((\alpha \cdot \nu) V)^2 = (\eta^2-\tau^2) I_N = d I_N$, $ d = - |d|$, $f(\varepsilon) >0$, and the power series representations of $\cos$, $\cosh$ and $\sinh$ lead to
	\begin{equation}\label{eq_lem_conf_2}
		\begin{aligned}
			\cos((\alpha \cdot \nu) &f(\varepsilon) V/2 )^{-1} \exp(-i(\alpha \cdot \nu) f(\varepsilon) VQ(t)) \\
			&=  \frac{\cosh(f(\varepsilon) \sqrt{\abs{d}} Q(t)) + (\alpha \cdot \nu) \frac{V}{\sqrt{d}} \sinh(f(\varepsilon)\sqrt{\abs{d}}Q(t))}{\cosh(f(\varepsilon)\sqrt{\abs{d}}/2)}.
		\end{aligned}
	\end{equation}
	Since $\Sigma$ satisfies the assumptions from Section~\ref{sec_not}~\eqref{it_def_Sigma}, it follows from \cite[eq.~(2.3)]{BHS23} that $\alpha \cdot \nu $ acts as a bounded operator in $H^{1/2}(\Sigma;\C^N)$. Thus, by \eqref{eq_lem_conf_2}  we get
	\begin{equation}\label{eq_lem_conf_3}
		\begin{aligned}
			&\bigl\| \cos((\alpha  \cdot \nu) f(\varepsilon) V/2 )^{-1} \exp(-i(\alpha \cdot \nu) f(\varepsilon) VQ(t))\bigr\|_{H^{1/2}(\Sigma;\C^N) \to H^{1/2}(\Sigma;\C^N)} \\
			&\hspace{100 pt} \leq  \frac{\cosh(f(\varepsilon)\sqrt{\abs{d}}Q(t)) +C\abs{\sinh(f(\varepsilon)\sqrt{\abs{d}} Q(t))}}{\cosh(f(\varepsilon)\sqrt{\abs{d}}/2)}\\
			&\hspace{100 pt} \leq C\frac{e^{f(\varepsilon)\sqrt{\abs{d}} \abs{Q(t)}}}{\cosh(f(\varepsilon)\sqrt{\abs{d}}/2)},
		\end{aligned}
	\end{equation}
	where $C>0$ does not depend on $t$.
	Hence, plugging \eqref{eq_lem_conf_3} into \eqref{eq_lem_conf_1}, and using $q \geq 0$ a.e. as well as $Q(1) =  Q(-1) = 1/2$, see \eqref{eq_q}, yields 
	\begin{equation*}
		\begin{aligned}
			\bigl\|f(\varepsilon)&Vq(t)(I+B_0(z)f(\varepsilon)V q)^{-1} C_0(z)u\bigr\|_{ 1/2}^2  \\ &\leq \frac{C}{\cosh(f(\varepsilon)\sqrt{\abs{d}}/2)^2} \int_{-1}^1f(\varepsilon)^2 q(t)^2e^{2f(\varepsilon)\sqrt{\abs{d}} \abs{Q(t)}}\,dt  \lVert  u \rVert_{L^2(\R^\theta,\C^N)}^2\\
			&\leq \frac{C  f(\varepsilon) \norm{q}_{L^\infty((-1,1))}}{\sqrt{\abs{d}}\cosh(f(\varepsilon)\sqrt{\abs{d}}/2)^2}\int_{-1}^1 e^{2f(\varepsilon)\sqrt{\abs{d}} \abs{Q(t)}} f(\varepsilon) \sqrt{\abs{d}} q(t) \,dt  \lVert  u \rVert_{L^2(\R^\theta,\C^N)}^2\\
			&\leq \frac{C   f(\varepsilon)}{\cosh(f(\varepsilon)\abs{d}/2)^2}  \int_{-f(\varepsilon)\sqrt{\abs{d}}/2}^{f(\varepsilon)\sqrt{\abs{d}}/2}  e^{2\abs{s}}\,ds  \lVert  u \rVert_{L^2(\R^\theta,\C^N)}^2\\
			&\leq \frac{C f(\varepsilon)}{\cosh(f(\varepsilon)\sqrt{\abs{d}}/2)^2} \int_{0}^{f(\varepsilon)\sqrt{\abs{d}}/2}  e^{2s} \,ds  \lVert  u \rVert_{L^2(\R^\theta,\C^N)}^2\\
			& \leq  C f(\varepsilon)\frac{e^{f(\varepsilon)\sqrt{\abs{d}}}-1}{\cosh(f(\varepsilon)\sqrt{\abs{d}}/2)^2}  \lVert  u \rVert_{L^2(\R^\theta,\C^N)}^2\\
			&\leq C f(\varepsilon)  \lVert  u \rVert_{L^2(\R^\theta,\C^N)}^2.
		\end{aligned}
	\end{equation*} 
\end{proof}

In the next lemma we use the auxiliary operator  $M_{\varepsilon} : \mathcal{B}^0(\Sigma) \to \mathcal{B}^0(\Sigma)$, $\varepsilon \in (0,\varepsilon_1)$, defined by 
\begin{equation*} 
	\begin{split}
		M_\varepsilon f(t) = \det \left(I- t \varepsilon W \right)f(t)\qquad \text{ for a.e. } t \in(-1,1), 
	\end{split}
\end{equation*}
where $W$ is the Weingarten map introduced in Section~\ref{sec_not}~\eqref{it_def_Sigma}. According to \cite[Lemma~3.6]{BHS23}  this operator is bounded, invertible, 
\begin{equation}\label{eq_M_eps_norm_est}
	\norm{M_{\varepsilon}}_{0\to0} \leq (1+\varepsilon C)\quad \text{and} \quad\norm{M_{\varepsilon} - I}_{0 \to 0} \leq \varepsilon C.
\end{equation}
Moreover, from the proof of \cite[Proposition~3.8 (with $\mathcal{P}_\varepsilon = \mathcal{I}_\varepsilon \mathcal{S}_\varepsilon$)]{BHS23} we obtain that $M_\varepsilon$ is connected to $\mathcal{P}_\varepsilon$ via the formula 
\begin{equation}\label{eq_P_eps_adjoint}
	\mathcal{P}_\varepsilon^* = M_\varepsilon \mathcal{P}_\varepsilon^{-1}.
\end{equation}
Having introduced and discussed the operator $M_\varepsilon$ we turn to the second preparatory lemma for Proposition~\ref{prop_dif_2}.

\begin{lem}\label{lem_confinement_2}
	Let $m = 0$, $q$ be as in \eqref{eq_q},   $V = \eta I_N + \tau \beta$, $\eta,\tau \in \R$ satisfy \eqref{eq_conf_V}, $f$ be as in \eqref{eq_conf_f} and  $z \in i \R$. Then, there  exists a $\delta_3 > 0$ and a $C>0$ such that $(I+B_\varepsilon(z)f(\varepsilon)V q)^{-1}$ is boundedly invertible in $\mathcal{B}^0(\Sigma)$ and
	\begin{equation*}
		\norm{f(\varepsilon)Vq(I+B_\varepsilon(z)f(\varepsilon)V q)^{-1}}_{0 \to 0} \leq C f(\varepsilon)
	\end{equation*}
	for all $\varepsilon \in (0,\delta_3)$.
\end{lem}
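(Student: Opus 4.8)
The standing hypotheses $m=0$, $z\in i\R$ and $d<0$ are tailor-made to turn $I+B_\varepsilon(z)f(\varepsilon)Vq$, after a harmless similarity transform, into $I$ plus a \emph{skew-adjoint} operator on $\mathcal{B}^0(\Sigma)$, which is automatically boundedly invertible with contractive inverse --- exactly the $\varepsilon$-uniform control we want; note that the naive route of perturbing off $B_0(z)$ is obstructed here, since Proposition~\ref{prop_conv_res}~(ii) only controls $B_\varepsilon(z)-B_0(z)$ in the $1/2\to 0$ norm, whereas $f(\varepsilon)Vq\,(I+B_0(z)f(\varepsilon)Vq)^{-1}$ does not map into $\mathcal{B}^{1/2}(\Sigma)$. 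Write $K_\varepsilon:=f(\varepsilon)Vq$ for the self-adjoint multiplication operator on $\mathcal{B}^0(\Sigma)$ with symbol $(x_\Sigma,t)\mapsto f(\varepsilon)q(t)V$, so that $\norm{K_\varepsilon}_{0\to0}\le Cf(\varepsilon)$. I would fix $\delta_3>0$ below $\varepsilon_2$ of Proposition~\ref{prop_conv_res}~(i) and small enough that the scalar multiplier $\det(I-t\varepsilon W)$ of $M_\varepsilon$ stays between $1/2$ and $3/2$ on $\Sigma\times(-1,1)$; then for $\varepsilon\in(0,\delta_3)$ the operator $M_\varepsilon$ is a positive self-adjoint multiplier with $\norm{M_\varepsilon^{\pm1/2}}_{0\to0}\le1+C\varepsilon$ by \eqref{eq_M_eps_norm_est}, and $M_\varepsilon$, $K_\varepsilon$, $\beta$ (all multiplication operators, the first scalar) commute pairwise.

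First I would rewrite $B_\varepsilon(z)$ via \eqref{eq_P_eps_adjoint}: setting $J_\varepsilon:=U_\varepsilon^*\mathcal{P}_\varepsilon$ one obtains $B_\varepsilon(z)=\mathcal{P}_\varepsilon^{-1}U_\varepsilon R_zU_\varepsilon^*\mathcal{P}_\varepsilon=M_\varepsilon^{-1}J_\varepsilon^*R_zJ_\varepsilon$, so that conjugation by $M_\varepsilon^{1/2}$ produces $\widehat B_\varepsilon(z):=M_\varepsilon^{1/2}B_\varepsilon(z)M_\varepsilon^{-1/2}=G_\varepsilon^*R_zG_\varepsilon$ with $G_\varepsilon:=J_\varepsilon M_\varepsilon^{-1/2}$. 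Since $M_\varepsilon$ commutes with $K_\varepsilon$,
\begin{equation*}
\begin{aligned}
I+B_\varepsilon(z)K_\varepsilon&=M_\varepsilon^{-1/2}\bigl(I+\widehat B_\varepsilon(z)K_\varepsilon\bigr)M_\varepsilon^{1/2},\\
K_\varepsilon(I+B_\varepsilon(z)K_\varepsilon)^{-1}&=M_\varepsilon^{-1/2}\,K_\varepsilon\bigl(I+\widehat B_\varepsilon(z)K_\varepsilon\bigr)^{-1}M_\varepsilon^{1/2},
\end{aligned}
\end{equation*}
and the whole statement reduces to bounded invertibility of $I+\widehat B_\varepsilon(z)K_\varepsilon$ together with $\norm{K_\varepsilon(I+\widehat B_\varepsilon(z)K_\varepsilon)^{-1}}_{0\to0}\le Cf(\varepsilon)$. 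Next comes the structural observation: $m=0$ and $\alpha_j\beta=-\beta\alpha_j$ give $\beta H=-H\beta$ on $H^1(\R^\theta;\C^N)$, and since $z\in i\R$ forces $\overline{z}=-z$, this yields $(H-z)\beta=-\beta(H-\overline{z})$, hence $R_z\beta=-\beta R_{\overline{z}}$; as $R_z^*=R_{\overline{z}}$ we get $(R_z\beta)^*=\beta R_{\overline{z}}=-R_z\beta$, i.e.\ $R_z\beta$ is skew-adjoint on $L^2(\R^\theta;\C^N)$. Because the constant matrix $\beta$, viewed as a multiplier, commutes with $\mathcal{P}_\varepsilon$, $U_\varepsilon^*$ and $M_\varepsilon$, it commutes with $G_\varepsilon$ and $G_\varepsilon^*$, so $\widehat B_\varepsilon(z)\beta=G_\varepsilon^*(R_z\beta)G_\varepsilon$ is skew-adjoint on $\mathcal{B}^0(\Sigma)$.

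Independently I would factor $V$ through $\beta$: by \eqref{eq_conf_V} the eigenvalues $\eta+\tau$ and $\eta-\tau$ of $V=\eta I_N+\tau\beta$ on the $(\pm1)$-eigenspaces of $\beta$ have opposite signs, so $\sign V=\epsilon_0\beta$ with $\epsilon_0:=\sign(\eta+\tau)\in\{-1,1\}$ and $V=\epsilon_0\beta\abs V$; consequently $K_\varepsilon=\epsilon_0\beta L_\varepsilon^2$, where $L_\varepsilon$ is multiplication by $(f(\varepsilon)q(t))^{1/2}\abs V^{1/2}$ --- a nonnegative self-adjoint operator with $\norm{L_\varepsilon}_{0\to0}\le Cf(\varepsilon)^{1/2}$ commuting with $\beta$. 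Setting $N_\varepsilon:=\widehat B_\varepsilon(z)\beta$, $X_\varepsilon:=I+\widehat B_\varepsilon(z)K_\varepsilon=I+\epsilon_0N_\varepsilon L_\varepsilon^2$ and $Y_\varepsilon:=I+\epsilon_0L_\varepsilon N_\varepsilon L_\varepsilon$, the operator $\epsilon_0L_\varepsilon N_\varepsilon L_\varepsilon$ is again skew-adjoint, hence $Y_\varepsilon$ is boundedly invertible with $\norm{Y_\varepsilon^{-1}}_{0\to0}\le1$; the elementary identity $Y_\varepsilon L_\varepsilon=L_\varepsilon X_\varepsilon$ (immediate from the definitions) then shows $X_\varepsilon$ is boundedly invertible with $X_\varepsilon^{-1}=I-\epsilon_0N_\varepsilon L_\varepsilon Y_\varepsilon^{-1}L_\varepsilon$ and $L_\varepsilon^2X_\varepsilon^{-1}=L_\varepsilon Y_\varepsilon^{-1}L_\varepsilon$, so that $K_\varepsilon X_\varepsilon^{-1}=\epsilon_0\beta L_\varepsilon Y_\varepsilon^{-1}L_\varepsilon$ and $\norm{K_\varepsilon X_\varepsilon^{-1}}_{0\to0}\le\norm{L_\varepsilon}_{0\to0}^2\le Cf(\varepsilon)$. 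Undoing the $M_\varepsilon^{1/2}$-conjugation (absorbing the factors $\norm{M_\varepsilon^{\pm1/2}}_{0\to0}\le1+C\varepsilon$ into $C$) then gives the lemma, with $\delta_3$ as above. The one genuinely substantial point is the structural observation that $m=0$, $z\in i\R$ and $d<0$ conspire to make $\widehat B_\varepsilon(z)\beta$ skew-adjoint while $V$ factors as $\epsilon_0\beta\abs V$; the rest is bookkeeping, the only minor technicality being that $L_\varepsilon$ need not be injective, which is precisely why I use $Y_\varepsilon L_\varepsilon=L_\varepsilon X_\varepsilon$ rather than a naive conjugation by $L_\varepsilon$.
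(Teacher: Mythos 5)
Your proof is correct and follows essentially the same strategy as the paper's: the anticommutation $R_z\beta=-\beta R_{\overline{z}}$ coming from $m=0$ and $z\in i\R$, the factorization of $Vq$ through $\beta$ sandwiched between nonnegative square roots (your $L_\varepsilon$ is the paper's $f(\varepsilon)^{1/2}D$ with $D=\sqrt{q}\,\textup{diag}(\sqrt{\abs{\eta+\tau}}I_{N/2},\sqrt{\abs{\eta-\tau}}I_{N/2})$), the resulting ``identity plus skew-adjoint'' operator with contractive inverse, and the $(I+ab)\leftrightarrow(I+ba)$ swap are all exactly the paper's ingredients. The one point where you genuinely deviate is the treatment of $M_\varepsilon$: your symmetric conjugation by $M_\varepsilon^{1/2}$ makes $\widehat{B}_\varepsilon(z)\beta$ exactly skew-adjoint, whereas the paper works with $\widetilde{B}_\varepsilon(z)=B_\varepsilon(z)M_\varepsilon^{-1}$ and then recovers $B_\varepsilon(z)$ via a stability-of-invertibility perturbation using $\norm{M_\varepsilon^{-1}-I}_{0\to0}\le C\varepsilon$ together with $\varepsilon f(\varepsilon)\to0$ --- a step your variant renders unnecessary.
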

\begin{proof}
	We start by defining $\widetilde{B}_\varepsilon(z):= B_\varepsilon(z)M_\varepsilon^{-1}$. Note that according to \eqref{eq_def_ABC_operators} and \eqref{eq_P_eps_adjoint} the equality
	\begin{equation*}
		(\widetilde{B}_\varepsilon(z))^*=  (\mathcal{P}_\varepsilon^{-1} U_\varepsilon R_ z U_\varepsilon^* \mathcal{P}_\varepsilon M_\varepsilon ^{-1})^* = M_\varepsilon ^{-1} M_\varepsilon \mathcal{P}_\varepsilon^{-1} U_\varepsilon R_ {\overline{z}} U_\varepsilon^* \mathcal{P}_\varepsilon M_\varepsilon^{-1} =  \widetilde{B}_\varepsilon(\overline{z})
	\end{equation*}
	holds in $\mathcal{B}^0(\Sigma)$. We also introduce
	\begin{equation*}
		D = \sqrt{q} \textup{diag}(\sqrt{|\eta + \tau|} I_{N/2}, \sqrt{|\eta - \tau|} I_{N/2})
	\end{equation*} 
	and
	\begin{equation*}
		E_\varepsilon(z) :=  \beta+ f(\varepsilon)\sign(\tau) D  \widetilde{B}_\varepsilon(z)  D: \mathcal{B}^0(\Sigma) \to \mathcal{B}^0(\Sigma).
	\end{equation*}  
	Next, we estimate the norm of the inverse of $E_\varepsilon(z)$. By $\beta^2 = I_N$ we obtain
	\begin{equation*}
		\begin{aligned}
			E_\varepsilon(z) (E_\varepsilon(z))^* &=  I  + f(\varepsilon)\sign(\tau) D(  \widetilde{B}_\varepsilon(z)\beta +\beta(\widetilde{B}_\varepsilon(z))^* )D 
			\\
			&\quad  +  f(\varepsilon)^2 D  \widetilde{B}_\varepsilon(z)  D (D \widetilde{B}_\varepsilon(z)D)^* .
		\end{aligned}	
	\end{equation*}
	We claim that the operator 
	\begin{equation*}
		\widetilde{B}_\varepsilon(z)\beta + \beta(\widetilde{B}_\varepsilon(z))^*  =  \widetilde{B}_\varepsilon(z) \beta + \beta \widetilde{B}_\varepsilon(\overline{z})  = \mathcal{P}_\varepsilon^{-1} U_\varepsilon ( R_z \beta + \beta  R_{\overline{z}}) U_\varepsilon^* \mathcal{P}_\varepsilon  M_\varepsilon^{-1} 
	\end{equation*}
	vanishes under the current assumptions. In fact, since $z \in i \R$, we have ${\overline{z} = -z}$, and as $m =0$,  the anticommutation rules from Section~\ref{sec_not}~\eqref{it_Dirac_matrices} yield $ H \beta= -\beta H$. Hence,
	\begin{equation*}
		R_z \beta + \beta R_{\overline{z}}  = (H-z)^{-1}  \beta + \beta (H-\overline{z})^{-1}  = \beta(-H-z)^{-1}  + \beta(H+z)^{-1}   = 0.
	\end{equation*}	
	This implies
	\begin{equation*}
		E_\varepsilon(z) (E_\varepsilon(z))^* = I + f(\varepsilon)^2 D  \widetilde{B}_\varepsilon(z)  D (D \widetilde{B}_\varepsilon(z)D)^*.
	\end{equation*}
	Analogously, one obtains
	\begin{equation*}
		(E_\varepsilon(z))^* E_\varepsilon(z) =I + f(\varepsilon)^2(D  \widetilde{B}_\varepsilon(z)  D)^* D \widetilde{B}_\varepsilon(z)D.
	\end{equation*}
	Thus, the self-adjoint operators $(E_\varepsilon(z))^* E_\varepsilon(z)$ and $E_\varepsilon(z) (E_\varepsilon(z))^*$ are bounded from below by one. In particular, $(E_\varepsilon(z))^* E_\varepsilon(z)$ and $E_\varepsilon(z) (E_\varepsilon(z))^*$ are boundedly invertible in $\mathcal{B}^0(\Sigma)$. Hence, the operators $ \bigl((E_\varepsilon(z))^* E_\varepsilon(z)\bigr)^{-1}(E_\varepsilon(z))^*$  and $(E_\varepsilon(z))^*\bigl(E_\varepsilon(z) (E_\varepsilon(z))^*\bigr)^{-1}$ are a bounded left and a bounded right inverse of $E_\varepsilon(z)$, respectively. Therefore, $E_\varepsilon(z)$ is boundedly invertible in $\mathcal{B}^{0}(\Sigma)$. Moreover, the   norm estimate 
	\begin{equation*}
		\norm{E_\varepsilon(z) f}_0 = \sqrt{((E_\varepsilon(z))^*E_\varepsilon(z) f,f)_{\mathcal{B}^0(\Sigma)}} \geq  \norm{f}_{0}, \quad f \in \mathcal{B}^0(\Sigma),
	\end{equation*}
	gives us
	\begin{equation}\label{eq_E_eps_inv_est}
		\norm{(E_\varepsilon(z))^{-1}}_{0 \to 0} \leq 1.
	\end{equation}
	
	Next, we choose $\delta_3   \in (0,\varepsilon_2]$, with $\varepsilon_2$ from Proposition~\ref{prop_conv_res}, sufficiently small such that 
	\begin{equation*}
		\|D (\widetilde{B}_\varepsilon(z) - B_\varepsilon(z)) D\|_{0 \to 0}  = \|D B_\varepsilon(z) ( M_\varepsilon^{-1} - I) D\|_{0 \to 0}\leq \frac{1}{2f(\varepsilon)}
	\end{equation*} 
	for all $\varepsilon \in (0,\delta_3)$ which is possible according to \eqref{eq_M_eps_norm_est}, Proposition~\ref{prop_conv_res} and the choice of $f$ in \eqref{eq_conf_f}. This estimate, 
	\begin{equation*}
		E_\varepsilon(z) - (\beta + f(\varepsilon)\sign(\tau)D B_\varepsilon(z)D) = f(\varepsilon)\sign(\tau)D (\widetilde{B}_\varepsilon(z) - B_\varepsilon(z)) D,
	\end{equation*}
	\eqref{eq_E_eps_inv_est} and the stability of bounded invertibility, see \cite[Chapter~IV, Theorem~1.16]{kato}, show that  the operator $ \beta + f(\varepsilon)\sign(\tau)  D B_\varepsilon(z)D$ is also boundedly invertible in $\mathcal{B}^0(\Sigma)$ and 
	\begin{equation*}
		\begin{aligned}
			\bigl\|(\beta + f(\varepsilon)&\sign(\tau)  D B_\varepsilon(z)D)^{-1}\bigr\|_{0 \to 0} \\
			&\leq  \frac{ \bigl\| (E_\varepsilon(z))^{-1} \bigr\|_{0 \to 0}}{1-\|f(\varepsilon)\sign(\tau) D ( B_\varepsilon(z)  -\widetilde{B}_\varepsilon(z) ) D \|_{0\to 0} \bigl\|(E_\varepsilon(z))^{-1}\bigr\|_{0 \to 0}}\\
			&\leq \frac{1}{1-\frac{f(\varepsilon)}{2f(\varepsilon)}} = 2  
		\end{aligned}
	\end{equation*}
	for all $ \varepsilon \in (0,\delta_3)$. Hence, the same is true for 
	\begin{equation*}
		I + f(\varepsilon) \sign(\tau) \beta D B_\varepsilon(z)D =  \beta (\beta + f(\varepsilon)\sign( \tau) D B_\varepsilon(z)D).
	\end{equation*} 
	Applying \cite[Proposition~2.1.8~(a)]{P94} and the identity $Vq = \sign(\tau) D \beta D $ yields that the operator $I + B_\varepsilon (z)f(\varepsilon)Vq$ is also boundedly invertible in $\mathcal{B}^0(\Sigma)$. Moreover, it is also easy to check that the identity 
	\begin{equation*}
		Vq(I + B_\varepsilon(z)f(\varepsilon)Vq)^{-1} = D(I + f(\varepsilon) \sign(\tau) \beta D B_\varepsilon(z)D)^{-1}\sign(\tau)\beta D
	\end{equation*}
	is valid. This shows
	\begin{equation*}
		\begin{aligned}
			\bigl\|f(\varepsilon)Vq(I + B_\varepsilon(z)f(\varepsilon)Vq)^{-1}\bigr\|_{0 \to 0} 
			&= f(\varepsilon)\bigl\|Vq(I + B_\varepsilon(z)f(\varepsilon)Vq)^{-1}\bigr\|_{0 \to 0} \\
			& \leq C f(\varepsilon)	\bigl\|(I + f(\varepsilon) \sign(\tau) \beta D B_\varepsilon(z)D)^{-1}\bigr\|_{0 \to 0}\\
			&\leq  C f(\varepsilon)
		\end{aligned}
	\end{equation*}
	for all $\varepsilon \in (0,\delta_3)$, which completes the proof.
\end{proof}

Finally, with the help of Proposition~\ref{prop_conv_res} and Lemmas~\ref{lem_confinement_5} and ~\ref{lem_confinement_2} we are able to prove \eqref{eq_res_dif_2} in the upcoming proposition.

\begin{prop}\label{prop_dif_2} 
	Let  $q$ be as in \eqref{eq_q}, $V  = \eta I_N + \tau \beta$, $\eta,\tau \in \R$ satisfy \eqref{eq_conf_V}, $V_\varepsilon$ be defined by \eqref{eq_V_eps}, $f$ be as in \eqref{eq_conf_f} \textup{(}with $\gamma \in (0,1/2)$\textup{)} and  $z \in \C\setminus\R$. Moreover, let $\widetilde{V} = (2/\sqrt{|d|})V$ and $\widetilde{V}_\varepsilon =  \textup{tanc}(\sqrt{d_\varepsilon}/2)f(\varepsilon) V$, where $d = \eta^2 -\tau^2$ and $d_\varepsilon = f(\varepsilon)^2 d$. Then, there exists a  $\delta_4>0$ and a $C>0$ such that
	\begin{equation*}
		\lVert(H_{\widetilde{V}_\varepsilon \delta_\Sigma} -z)^{-1} -(H_{f(\varepsilon)V_\varepsilon} -z)^{-1} \rVert_{L^2(\R^{\theta};\C^N) \to L^2(\R^{\theta};\C^N)} \leq C f(\varepsilon)^{3/2} \varepsilon^{\gamma} 
	\end{equation*}
	for all $ \varepsilon \in  (0,\delta_4)$.
\end{prop}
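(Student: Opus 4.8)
The plan is to combine the two resolvent representations from Proposition~\ref{prop_conv_res}~(iii) and~(iv) into a telescoping decomposition of the resolvent difference and to control the resulting summands with the convergence rates of Proposition~\ref{prop_conv_res}~(ii) together with Lemmas~\ref{lem_confinement_5} and~\ref{lem_confinement_2}. Because Lemma~\ref{lem_confinement_2} is only available for $m=0$ and $z\in i\R$, the first step is to reduce the general statement to that case. To this end, write $H^{0}_{\widetilde{V}_\varepsilon\delta_\Sigma}$ and $H^{0}_{f(\varepsilon)V_\varepsilon}$ for the operators $H_{\widetilde{V}_\varepsilon\delta_\Sigma}$ and $H_{f(\varepsilon)V_\varepsilon}$ with the mass term removed; since $\widetilde{V}_\varepsilon$ and $V_\varepsilon$ do not depend on $m$, one has $H_{\widetilde{V}_\varepsilon\delta_\Sigma}=H^{0}_{\widetilde{V}_\varepsilon\delta_\Sigma}+m\beta$ and $H_{f(\varepsilon)V_\varepsilon}=H^{0}_{f(\varepsilon)V_\varepsilon}+m\beta$ with domains independent of $m$. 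Hence, fixing $z_0:=i$ and the bounded, $\varepsilon$-independent operator $S:=m\beta+z_0-z$, the second resolvent identity yields
\begin{equation*}
	(H_{\widetilde{V}_\varepsilon\delta_\Sigma}-z)^{-1}-(H_{f(\varepsilon)V_\varepsilon}-z)^{-1}=\mathcal{T}_1\bigl[(H^{0}_{\widetilde{V}_\varepsilon\delta_\Sigma}-z_0)^{-1}-(H^{0}_{f(\varepsilon)V_\varepsilon}-z_0)^{-1}\bigr]\mathcal{T}_2
\end{equation*}
with $\mathcal{T}_1:=I-(H_{\widetilde{V}_\varepsilon\delta_\Sigma}-z)^{-1}S$ and $\mathcal{T}_2:=I-S(H_{f(\varepsilon)V_\varepsilon}-z)^{-1}$. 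All four operators are self-adjoint (for the $\delta$-shell operators by Proposition~\ref{prop_H_V_hat}, as $\widetilde{d}_\varepsilon\neq 4$), so $\norm{\mathcal{T}_1},\norm{\mathcal{T}_2}\leq 1+\norm{S}/\abs{\text{Im}\,z}$ uniformly in $\varepsilon$, and it suffices to prove the asserted bound for $m=0$ and $z=z_0=i$.

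For that case I set $K_\varepsilon:=f(\varepsilon)Vq$. By Lemma~\ref{lem_confinement_2} the operator $I+B_\varepsilon(i)K_\varepsilon$ is boundedly invertible in $\mathcal{B}^0(\Sigma)$ for small $\varepsilon$, so Proposition~\ref{prop_conv_res}~(iii) applies; subtracting it from the formula in Proposition~\ref{prop_conv_res}~(iv) makes the free resolvents cancel, and after inserting telescoping terms and using $(I+B_\varepsilon K_\varepsilon)^{-1}-(I+B_0 K_\varepsilon)^{-1}=(I+B_\varepsilon K_\varepsilon)^{-1}(B_0-B_\varepsilon)K_\varepsilon(I+B_0 K_\varepsilon)^{-1}$ one obtains
\begin{equation*}
	\begin{aligned}
		(H^{0}_{\widetilde{V}_\varepsilon\delta_\Sigma}-i)^{-1}-(H^{0}_{f(\varepsilon)V_\varepsilon}-i)^{-1} &=(A_\varepsilon-A_0)K_\varepsilon(I+B_\varepsilon K_\varepsilon)^{-1}C_\varepsilon\\
		&\quad+A_0 K_\varepsilon(I+B_\varepsilon K_\varepsilon)^{-1}(C_\varepsilon-C_0)\\
		&\quad+A_0 K_\varepsilon(I+B_\varepsilon K_\varepsilon)^{-1}(B_0-B_\varepsilon)K_\varepsilon(I+B_0 K_\varepsilon)^{-1}C_0,
	\end{aligned}
\end{equation*}
all operators evaluated at $z=i$. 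I then estimate the three summands separately. For the first two I use $\norm{A_\varepsilon-A_0}_{0\to L^2(\R^{\theta};\C^N)}\leq C\varepsilon^\gamma$ and $\norm{C_\varepsilon-C_0}_{L^2(\R^{\theta};\C^N)\to 0}\leq C\varepsilon^\gamma$ from Proposition~\ref{prop_conv_res}~(ii), the boundedness of $A_0$ and the uniform boundedness of $C_\varepsilon$ (Proposition~\ref{prop_conv_res}~(i)), and $\norm{K_\varepsilon(I+B_\varepsilon K_\varepsilon)^{-1}}_{0\to0}\leq Cf(\varepsilon)$ from Lemma~\ref{lem_confinement_2}, which gives $Cf(\varepsilon)\varepsilon^\gamma\leq Cf(\varepsilon)^{3/2}\varepsilon^\gamma$ once $f\geq1$. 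The third summand I read as the composition $K_\varepsilon(I+B_0 K_\varepsilon)^{-1}C_0\colon L^2(\R^{\theta};\C^N)\to\mathcal{B}^{1/2}(\Sigma)$, then $B_0-B_\varepsilon\colon\mathcal{B}^{1/2}(\Sigma)\to\mathcal{B}^0(\Sigma)$, then $A_0 K_\varepsilon(I+B_\varepsilon K_\varepsilon)^{-1}\colon\mathcal{B}^0(\Sigma)\to L^2(\R^{\theta};\C^N)$, and bound it by $\norm{K_\varepsilon(I+B_0 K_\varepsilon)^{-1}C_0}_{L^2(\R^{\theta};\C^N)\to 1/2}\,\norm{B_0-B_\varepsilon}_{1/2\to0}\,\norm{A_0}_{0\to L^2(\R^{\theta};\C^N)}\,\norm{K_\varepsilon(I+B_\varepsilon K_\varepsilon)^{-1}}_{0\to0}$, which by Lemma~\ref{lem_confinement_5}, Proposition~\ref{prop_conv_res}~(ii) and Lemma~\ref{lem_confinement_2} is at most $Cf(\varepsilon)^{1/2}\cdot C\varepsilon^\gamma\cdot Cf(\varepsilon)=Cf(\varepsilon)^{3/2}\varepsilon^\gamma$. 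Taking $\delta_4$ smaller than $\delta_2$, $\delta_3$, $\varepsilon_2$ and small enough that $f\geq1$ on $(0,\delta_4)$ then closes the case $m=0$, $z=i$, and the reduction from the first paragraph completes the proof.

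The step I expect to be the main obstacle is the control of the third summand, that is, arranging the mapping spaces so that the diverging factor $f(\varepsilon)$ carried by $K_\varepsilon=f(\varepsilon)Vq$ does not spoil the estimate. A naive bound---placing $B_0-B_\varepsilon$ on $\mathcal{B}^0(\Sigma)$ and each of the two $K_\varepsilon$-blocks at cost $f(\varepsilon)$---would only produce $Cf(\varepsilon)^2\varepsilon^\gamma$, which need not tend to $0$ under \eqref{eq_conf_f}. The remedy is to measure one of the two flanking blocks in the stronger norm of $\mathcal{B}^{1/2}(\Sigma)$, which by Lemma~\ref{lem_confinement_5} costs only $f(\varepsilon)^{1/2}$ instead of $f(\varepsilon)$, and to spend the smoothing $\mathcal{B}^{1/2}(\Sigma)\to\mathcal{B}^0(\Sigma)$ of $B_0-B_\varepsilon$ to recover the decisive $\varepsilon^\gamma$; the remaining block still costs $f(\varepsilon)$, which is precisely where the cancellation encoded in Lemma~\ref{lem_confinement_2}---and therefore the preliminary reduction to $m=0$, $z\in i\R$---is indispensable. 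Once this bookkeeping is in place, the rest is a routine application of the cited estimates together with the triangle inequality.
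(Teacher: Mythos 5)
Your proof is correct and follows essentially the same route as the paper: reduce to $m=0$, $z\in i\R$, telescope the difference of the two $A\,K_\varepsilon(I+BK_\varepsilon)^{-1}C$ resolvent formulas from Proposition~\ref{prop_conv_res}~(iii)--(iv), and balance the diverging $f(\varepsilon)$ factors exactly as in the paper by spending Lemma~\ref{lem_confinement_5} (cost $f(\varepsilon)^{1/2}$) on one flank, Lemma~\ref{lem_confinement_2} (cost $f(\varepsilon)$) on the other, and the $\varepsilon^\gamma$ rates of Proposition~\ref{prop_conv_res}~(ii) on the middle differences. The only cosmetic deviations are a slightly different (but equivalent) pairing in the telescoping and your explicit second-resolvent-identity justification of the reduction step, which the paper simply delegates to Kato.
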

\begin{proof}
	First, let us mention that we can assume w.l.o.g. $m=0$ and $z \in i \R$ since bounded self-adjoint perturbations do not influence the norm resolvent convergence; see for instance \cite[Chapter~IV, Remark~2.16 and Theorems~2.17 and 2.20]{kato}.
	
	We set $\delta_4 := \min\{\delta_2,\delta_3\} > 0$ with $\delta_2$ and $\delta_3$ from Lemmas~\ref{lem_confinement_5} and \ref{lem_confinement_2}, respectively, and assume throughout this proof $\varepsilon \in (0,\delta_4)$. Then, Lemma~\ref{lem_confinement_2} shows  $-1 \in \rho(B_\varepsilon(z)Vq)$. Moreover, applying the resolvent formulas from Proposition~\ref{prop_conv_res} (iii) and (iv) lets us write
	\begin{equation}\label{eq_res_diff_R}
		\begin{aligned}
			(H_{\widetilde{V}_\varepsilon \delta_\Sigma } -z)^{-1} & - (H_{f(\varepsilon)V_\varepsilon} -z)^{-1} \\
			&\quad  = -A_0(z) f(\varepsilon)Vq (I+B_0(z)f(\varepsilon)V q)^{-1} C_0(z) \\
			&\qquad  + A_\varepsilon(z) f(\varepsilon)Vq (I+B_\varepsilon(z)f(\varepsilon)V q)^{-1} C_\varepsilon(z)\\
			&\quad = \mathcal{R}_1(z) + \mathcal{R}_2(z) + \mathcal{R}_3(z)
		\end{aligned}
	\end{equation}
	with 
	\begin{equation*}
		\begin{aligned}
			\mathcal{R}_1(z) &:=  (A_\varepsilon(z)-A_0(z)) f(\varepsilon)Vq (I+B_0(z)f(\varepsilon)V q)^{-1} C_0(z), \\
			\mathcal{R}_2(z) &:=  A_\varepsilon(z) f(\varepsilon)Vq \bigl((I+B_\varepsilon(z)f(\varepsilon)V q)^{-1}-(I+B_0(z)f(\varepsilon)V q)^{-1}\bigr)  C_0(z),\\
			\mathcal{R}_3(z) &:= A_\varepsilon(z) f(\varepsilon)Vq (I+B_\varepsilon(z)f(\varepsilon)V q)^{-1} (C_\varepsilon(z) - C_0(z)).
		\end{aligned}
	\end{equation*}
	Next, we estimate $\mathcal{R}_1(z)$, $\mathcal{R}_2(z)$ and $\mathcal{R}_3(z)$.
	Applying Proposition~\ref{prop_conv_res}~(i) and (ii), and Lemma~\ref{lem_confinement_5} yields
	\begin{equation*}
		\begin{aligned}
			\lVert&\mathcal{R}_1(z)\rVert_{L^2(\R^\theta;\C^N) \to L^2(\R^\theta;\C^N)}\\
			& \leq \norm{A_\varepsilon(z)-A_0(z)}_{0\to L^2(\R^{\theta};\C^N)} \norm{f(\varepsilon)V q (I+B_0(z)f(\varepsilon)V q)^{-1} C_0(z) }_{L^2(\R^\theta;\C^N) \to 0} \\
			&\leq \norm{A_\varepsilon(z)-A_0(z)}_{0 \to L^2(\R^{\theta};\C^N)} \| f(\varepsilon)V q (I+B_0(z)f(\varepsilon)V q)^{-1} C_0(z) \|_{L^2(\R^\theta;\C^N) \to 1/2}\\
			&\leq C \varepsilon^{\gamma} f(\varepsilon)^{1/2}.
		\end{aligned}
	\end{equation*}
	Using  Lemma~\ref{lem_confinement_2} additionally gives us
	\begin{equation*}
		\begin{aligned}
			&\lVert\mathcal{R}_2(z)\rVert_{L^2(\R^\theta;\C^N) \to L^2(\R^\theta;\C^N)}
			\\
			& =  \lVert A_\varepsilon(z) f(\varepsilon)V q(I+B_\varepsilon(z)f(\varepsilon)V q)^{-1} \\
			&\qquad \cdot(B_0(z) - B_\varepsilon(z)) f(\varepsilon)V q (I+B_0(z)f(\varepsilon)V q)^{-1} C_0(z) \rVert_{L^2(\R^{\theta};\C^N) \to L^2(\R^{\theta};\C^N)} \\
			& \leq \norm{A_\varepsilon(z)}_{0 \to L^2(\R^\theta;\C^N)} \norm{f(\varepsilon)V q(I+B_\varepsilon(z)f(\varepsilon)V q)^{-1}}_{0 \to 0}\\
			&\qquad\cdot \norm{B_\varepsilon(z) - B_0(z)}_{1/2 \to 0} \| f(\varepsilon)V q (I\!+\!B_0(z)f(\varepsilon)V q)^{-1} C_0(z) \|_{L^2(\R^\theta;\C^N) \to 1/2}\\
			&\leq C \varepsilon^{\gamma} f(\varepsilon)^{3/2}.
		\end{aligned}
	\end{equation*}
	Similarly, we get
	\begin{equation*}
		\begin{aligned}
			\lVert\mathcal{R}_3(z)\rVert_{L^2(\R^\theta;\C^N) \to L^2(\R^\theta;\C^N)}
			&\leq \norm{A_\varepsilon(z)}_{0\to L^2(\R^{\theta};\C^N)} \norm{f(\varepsilon)V q (I+B_\varepsilon(z)f(\varepsilon)V q)^{-1}}_{0 \to 0} \\
			& \quad\cdot\norm{C_\varepsilon(z) - C_0(z) }_{L^2(\R^\theta;\C^N) \to 0} \\
			&\leq C \varepsilon^{\gamma} f(\varepsilon).
		\end{aligned}
	\end{equation*}
	Combining these estimates with \eqref{eq_res_diff_R} yields the assertion.
\end{proof}

\section*{Acknowledgment}
This research was funded in whole by the Austrian Science Fund (FWF) 10.55776/P 33568-N. For the purpose of open access, the author has applied a CC BY public copyright licence to any Author Accepted Manuscript version arising from this submission.

\bibliographystyle{abbrv}

\begin{thebibliography}{99}
	
	\bibitem{AMV14}
	N.~Arrizabalaga, A.~Mas, and L.~Vega.:
	\newblock Shell interactions for {D}irac operators.
	\newblock  J. Math. Pures Appl. (9) 102(4), 617--639 (2014)
	
	
	\bibitem{AMV15}
	N.~Arrizabalaga, A.~Mas, and L.~Vega.
	\newblock Shell interactions for {D}irac operators: on the point spectrum and the confinement.
	\newblock  SIAM J. Math. Anal. 47(2), 1044--1069 (2015)
	
	\bibitem{BEHL19}
	J.~Behrndt, P.~Exner, M. Holzmann, and V.~Lotoreichik.
	\newblock	On Dirac operators in $\R^3$ with electrostatic and Lorentz scalar $\delta$-shell interactions.
	\newblock Quantum Studies 6(3), 295--314 (2019)
	
	\bibitem{BHM20}
	J.~Behrndt, M.~Holzmann, and A. Mas.
	\newblock Self-adjoint Dirac operators on domains in $\R^3$.
	\newblock Ann. Henri Poincar\'e 21(8), 2681--2735 (2020)
	
	
	\bibitem{BHOP20}
	J.~Behrndt, M.~Holzmann, T.~Ourmieres-Bonafos, and K.~Pankrashkin.
	\newblock Two-dimensional Dirac operators with singular interactions supported on closed curves.
	\newblock J. Funct. Anal. 279(8), 108700 (2020)
	
	
	\bibitem{BHS23}
	J.~Behrndt, M.~Holzmann, and C.~Stelzer.
	\newblock Approximation of Dirac operators with $\delta$-shell potentials in the norm resolvent sense.
	\newblock Preprint: arXiv:2308.13344
	
	\bibitem{BHSL25}
	J.~Behrndt, M.~Holzmann, and C.~Stelzer-Landauer.
	\newblock Approximation of Dirac operators with $\delta$-shell potentials in the norm resolvent sense, II. Quantitative results.
	\newblock Preprint: In preparation
	
	\bibitem{BHSS22}
	J.~Behrndt, M.~Holzmann, C. Stelzer-Landauer, and G. Stenzel.
	\newblock Boundary triples and Weyl functions for Dirac operators with singular interactions.
	\newblock  Rev. Math. Phys. 36(2), 2350036 (2024)
	
	
	\bibitem{BHT23} J. Behrndt, M. Holzmann, and M. Tu\v sek.
	\newblock Two-dimensional Dirac operators with general $\delta$-shell interactions supported on a straight line.
	\newblock J. Phys. A 56(4), 045201 (2023)
	
	\bibitem{BFSB17_1}
	R.\,D. Benguria, S.~Fournais, E.~Stockmeyer, and H.~Van Den~Bosch.
	\newblock Self-adjointness of two-dimensional {D}irac operators on domains.
	\newblock  Ann. Henri Poincar\'e 18(4), 1371--1383 (2017)
	
	\bibitem{BFSB17_2}
	R.\,D. Benguria, S.~Fournais, E.~Stockmeyer, and H.~Van Den~Bosch.
	\newblock Spectral gaps of {D}irac operators describing graphene quantum dots.
	\newblock Math. Phys. Anal. Geom. 20(2), 11 (2017)
	
	
	\bibitem{CLMT21}
	B.~Cassano, V.~Lotoreichik, A.~Mas, and M.~Tu\v{s}ek.
	\newblock General $\delta$-shell interactions for the two-dimensional Dirac operator: self-adjointness and approximation.
	\newblock  Rev. Mat. Iberoam. 39(4), 1443--1492 (2023)
	
	\bibitem{DES89}
	J.~Dittrich, P.~Exner, and P.~{\v{S}}eba.
	\newblock Dirac operators with a spherically symmetric {$\delta$}-shell
	interaction.
	\newblock  J. Math. Phys. 30(12), 2875--2882 (1989)
	
	
	\bibitem{kato}
	T.~Kato. 
	\newblock Perturbation Theory for Linear Operators.
	\newblock Classics in Mathematics. Springer-Verlag, Berlin (1995).
	\newblock Reprint of the 1980 edition
	
	
	
	\bibitem{M87}
	J.~Marschall.
	\newblock The trace of Sobolev-Slobodeckij spaces on Lipschitz domains.
	\newblock Manuscripta Math. 58(1-2), 47--65 (1987)
	
	\bibitem{MP18}
	A. Mas and F. Pizzichillo.
	\newblock Klein's Paradox and the relativistic $\delta$-shell interaction in $\mathbb{R}^3$.
	\newblock  Anal. \& PDE  11(3), 705--744 (2018) 
	
	
	\bibitem{M00}
	W.~McLean.
	\newblock Strongly Elliptic Systems and Boundary Integral Equations.
	\newblock Cambridge University Press, Cambridge (2000)
	
	
	\bibitem{OV16}
	T.~Ourmi{\`e}res-Bonafos and L.~Vega.
	\newblock A strategy for self-adjointness of Dirac operators: Application to the {MIT} bag model and $\delta$-shell interactions. 
	\newblock  Publ. Mat. 62(2), 397--437 (2018)
	%
	
	\bibitem{P94}
	T.~Palmer.
	\newblock  Banach Algebras and the General Theory of *-Algebras, Vol. 1.
	\newblock Cambridge University Press, Cambridge (1994)
	%
	
	\bibitem{R22a}
	V.~Rabinovich.
	\newblock Dirac Operators with Delta-Interactions on Smooth Hypersurfaces in $\mathbb{R}^n$.
	\newblock J. Fourier Anal. Appl. 28(2), 20 (2022)
	
	
	\bibitem{Ru21}
	V.~R\r{u}\v{z}ek.
	\newblock One-dimensional relativistic point interactions -- approximations by regular potentials, application to models of the Dirac materials.
	\newblock  Master's thesis, \url{https://dspace.cvut.cz/bitstream/handle/10467/95493/F4-DP-2021-Ruzek-Vaclav-dp_mf_21_ruzek.pdf?sequence=-1&isAllowed=y} (2021). Accessed 26 May 2025 
	
	
	\bibitem{SL24}
	C.~Stelzer-Landauer.
	\newblock  Approximation of Dirac Operators with Delta-Shell Potentials in the Norm Resolvent Sense.
	\newblock Monographic Series TU Graz / Computation in Engineering and Science. Verlag der TU Graz, Graz (2025)
	
	\bibitem{T92}
	B.~Thaller.
	\newblock The {D}irac {E}quation.
	\newblock Texts and Monographs in Physics. Springer-Verlag, Berlin (1992)
	
	
	\bibitem{Z23}
	M.~Zreik.
	\newblock On the approximation of the $\delta$-shell interaction for the 3-D Dirac operator.
	\newblock Cubo 26(3), 489--505 (2024)
	
	\bibitem{Z24}
	M.~Zreik.
	\newblock On the approximation of the Dirac operator coupled with confining Lorentz scalar $\delta$-shell interactions.
	\newblock Preprint: arXiv:2404.07784
	
\end{thebibliography}

\end{document}